\documentclass[12pt, twoside]{article}
\usepackage{graphicx,color,enumerate,hyperref,epsf,tabularx,booktabs}
\usepackage{amsfonts,amsthm,amsmath,amssymb,bm,times}

\pagestyle{myheadings}
\def\titlerunning#1{\gdef\titrun{#1}}
\makeatletter
\def\author#1{\gdef\autrun{\def\and{\unskip, }#1}\gdef\@author{#1}}
\def\address#1{{\def\and{\\\hspace*{18pt}}\renewcommand{\thefootnote}{}%
\footnote {#1}}%
\markboth{\autrun}{\titrun}}
\makeatother
\def\email#1{e-mail: #1}
\def\subjclass#1{{\renewcommand{\thefootnote}{}%
\footnote{\emph{Mathematics Subject Classification (2010):} #1}}}
\def\keywords#1{\par\medskip
\noindent\textbf{Keywords.} #1}

\def\N{\mathbb{N}}

\newtheorem{theorem}{Theorem}[section]
\newtheorem{definition}[theorem]{Definition}
\newtheorem{lemma}[theorem]{Lemma}
\newtheorem{corollary}[theorem]{Corollary}
\newtheorem{proposition}[theorem]{Proposition}

\numberwithin{equation}{section}

\frenchspacing

\textwidth=15cm
\textheight=23cm
\parindent=16pt
\oddsidemargin=-0.5cm
\evensidemargin=-0.5cm
\topmargin=-0.5cm

\begin{document}

\baselineskip=17pt

\titlerunning{Ordering starlike trees}

\title{Ordering starlike trees by the totality of their spectral moments}

\author{Dragan Stevanovi\'c}

\date{}

\maketitle

\address{D. Stevanovi\'c: Mathematical Institute, Serbian Academy of Sciences and Arts, 
                          Kneza Mihaila 36, 11000 Belgrade, Serbia; \email{dragance106@yahoo.com}}

\subjclass{Primary 05C05; Secondary 06A05}

\begin{abstract}
The $k$-th spectral moment $M_k(G)$ of the adjacency matrix of a graph~$G$ 
represents the number of closed walks of length~$k$ in~$G$.
We study here the partial order $\preceq$ of graphs,
defined by $G\preceq H$ if $M_k(G)\leq M_k(H)$ for all $k\geq 0$,
and are interested in the question 
when is $\preceq$ a linear order within a specified set of graphs?
Our main result is that $\preceq$ is a linear order on 
each set of starlike trees with constant number of vertices.
Recall that a connected graph $G$ is a starlike tree
if it has a vertex~$u$ such that the components of $G-u$ are paths,
called the branches of~$G$.
It turns out that the $\preceq$ ordering of starlike trees with constant number of vertices 
coincides with the shortlex order of sorted sequence of their branch lengths.

\keywords{Linear order; Spectral moments; Closed walks; Starlike trees}
\end{abstract}

\section{Introduction}
\label{sc-intro}

Let $G=(V,E)$ be a simple, connected graph with at least one edge.
A walk of length~$k$ in~$G$ is a sequence of its vertices $W\colon v_0,\dots,v_k$
such that $v_iv_{i+1}$ is an edge of~$G$ for each $i=0,\dots,k-1$.
It is a folklore result in graph theory that 
the number of walks of length~$k$ between vertices $u$ and~$v$ is equal to~$A^k(G)_{u,v}$,
where $A(G)$ is the adjacency matrix of~$G$.

The number of all walks $W_k(G)$ of length $k$ in $G$ is thus the sum of entries of $A^k(G)$.
Inequalities between numbers of walks of different lengths and other graph parameters
attracted attention of many researchers,
and we mention in passing a few such results.
Let $d_v$ denote the degree of a vertex $v$, and 
let $\bar d=2|E|/n$ denote the average vertex degree,
where $n=|V|$ is the number of vertices of~$G$.
Erd\"os and Simonovits~\cite{ersi82} proved that $n\bar{d}^k\leq W_k(G)$ for $k\in\N$,
while Fiol and Garriga~\cite{figa09} proved that $W_k(G)\leq\sum_{v\in V} d_v^k$,
which is a special case of an older result by Hoffman~\cite{ho67}.
Dress and Gutman~\cite{drgu03} showed that $W^2_{a+b}(G)\leq W_{2a}(G)W_{2b}(G)$ for $a,b\in\N$,
while T\"aubig~\cite{ta17} showed that $\left(W_a(G)/n\right)^b\leq W_{ab}(G)/n$.
Further examples of inequalities of this type are surveyed in a recent book by T\"aubig~\cite{ta17}.

A walk $W\colon v_0,\dots,v_k$ in~$G$ is closed if $v_0=v_k$.
Let $M_k(G,v)$ denote the number of closed walks of length~$k$ starting and ending at a vertex~$v$ of~$G$.
The sequence of numbers $M_k(G,v)$, $k\geq 2$,
provides a certain glimpse into the density of edges in the vicinity of~$v$.
For example, $M_2(G,v)$ is equal to the degree of~$v$, 
$M_3(G,v)$ is equal to twice the number of triangles containing~$v$,
while for larger values of~$k$, 
$M_k(G,v)$ counts a mix of closed walks going up to the distance $\lfloor k/2\rfloor$ from~$v$.
Cumulatively, 
let $M_k(G) = \sum_{v\in V} M_k(G,v)$ denote the total number of closed walks of length~$k$ in~$G$.

The numbers of all walks and closed walks are closely related to spectral properties of a connected graph.
Let $\lambda_1\geq\dots\geq\lambda_n$ be the eigenvalues of the adjacency matrix $A(G)$,
with $x_1,\dots,x_n$ the appropriate eigenvectors forming an orthonormal basis.
From the spectral decomposition~\cite{st15}
$$
A(G)=\sum_{i=1}^n \lambda_i x_i x_i^T
$$
and orthonormality of eigenvectors we have $A(G)^k=\sum_{i=1}^n \lambda_i^k x_i x_i^T$,
so that
\begin{equation}
\label{eq-spectral-moment}
W_k(G) = \sum_{i=1}^n \lambda_i^k \left(\sum_{u\in V} x_{i,u}\right)^2,
\quad\mbox{and}\quad
M_k(G) = \sum_{i=1}^n \lambda_i^k,
\end{equation}
i.e., the numbers of closed walks at the same time represent the spectral moments of~$A(G)$.
From the Perron-Frobenius theorem~\cite[Chapter XIII]{ga59}
which implies that $\lambda_1\geq|\lambda_i|$ for each $i=2,\dots,n$ and
that the entries of~$x_1$ are positive when $G$ is connected and has at least one edge,
we further get~\cite{cv71}
\begin{equation}
\label{eq-lambda-limit}
\lambda_1 = \lim_{k\to\infty} \sqrt[k]{W_k(G)} = \lim_{k\to\infty} \sqrt[2k]{M_{2k}(G)}. 
\end{equation}
Closed walks of even length are taken above to avoid dealing separately with bipartite graphs,
which do not have closed walks of odd lengths.

Lexicographical ordering of graphs by spectral moments
had been used earlier in producing graph catalogues~\cite{cvpe84}.
Some theoretical properties of such orderings, 
mostly within the sets of trees and unicyclic graphs,
have been reported in the literature~\cite{cvro87,wuli10,pahulili11,chli12,palili12,chlili12,lihu16}.
However, while helping to produce ordering of graphs 
from the sparsest to the densest in an intuitive sense,
lexicographical ordering by spectral moments does not have implications 
on behaviour of spectral radius or other spectral properties of graphs.
We are therefore rather interested in the following partial order of graphs
that takes into account the totality of their spectral moments.

\begin{definition}
For two graphs $G$ and $H$,
let $G\preceq H$ if $M_k(G)\leq M_k(H)$ for each $k\geq 0$.
Further, let $G\prec H$ if $G\preceq H$ and there exists $k'\geq 0$ such that $M_{k'}(G)<M_{k'}(H)$.
\end{definition}

Hence from (\ref{eq-lambda-limit}) we have
$$
G\preceq H \quad\Rightarrow\quad \lambda_1(G)\leq\lambda_1(H).
$$
The $\preceq$ order also has implications on the Estrada index of graphs.
Estrada and Highman \cite[Section 3]{eshi10} proposed the use of a weighted series of the numbers of closed walks
$$
f_c(G,v) = \sum_{k\geq 0} c_k M_k(G,v)
$$
as a descriptor of complex networks,
where $(c_k)_{k\geq 0}$ is a predefined sequence of nonnegative weights that makes the series convergent.
Values $f_c(G,v)$ may then be considered as the closed walk based measure of vertex centrality,
while
$$
f_c(G) = \sum_{v\in V} f_c(G,v) = \sum_{k\geq 0} c_k M_k(G)
$$
represents a cumulative closed walk based descriptor of a network.
The Estrada's original suggestion \cite{es00} for the sequence $(c_k)_{k\geq 0}$ was $c_k=1/k!$,
which puts more emphasis on shorter closed walks and ensures the convergence, 
since from~(\ref{eq-spectral-moment})
$$
f_c(G) = \sum_{k\geq 0} \frac{M_k(G)}{k!} 
       = \sum_{i=1}^n \sum_{k\geq 0} \frac{\lambda_i^k(G)}{k!} 
       = \sum_{i=1}^n e^{\lambda_i(G)}.
$$
This so-called Estrada index $EE(G)=\sum_{i=1}^n e^{\lambda_i}$ 
has been initially applied in measuring the degree of protein folding \cite{es00,es02,es04},
the centrality of complex networks \cite{es07} and the branching of molecular graphs \cite{es06,gufumagl07}.
It has been steadily gaining popularity in mathematical community,
as Zentralblatt now reports more than a hundred research articles on the Estrada index.


Properties of the $\preceq$ order have been studied in a few earlier papers.
Ili\'c and the author~\cite{ilst10} provided
an analogue of the Li-Feng lemma~\cite{life79} (see also \cite[Theorem 6.2.2]{cvrosi97}),
that represents a basic tool in dealing with the $\preceq$ order.
\begin{lemma}[\cite{ilst10}]
\label{le-li-feng}
Let $u$ be a vertex of a connected graph~$G$ with at least one edge.
For nonnegative integers $p$ and~$q$,
let $G(u;p,q)$ denote the graph obtained from~$G$ 
by attaching two pendent paths of lengths $p$ and $q$, respectively, at $u$.
If $p\geq q+2$, then 
$$
G(u;p,q)\prec G(u;p-1,q+1).
$$
\end{lemma}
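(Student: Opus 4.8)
The plan is to convert the claim into a coefficientwise inequality between the power series $\mathcal M_{H}(t):=\sum_{k\ge 0}M_k(H)\,t^k$ attached to the two graphs, and then to localize that inequality to a single comparison inside a path. Throughout, ``nonnegative'' means ``with nonnegative power-series coefficients in $t$''. Set $\ell=p+q$ and observe that both $H_1:=G(u;p,q)$ and $H_2:=G(u;p-1,q+1)$ consist of a copy of $G$ glued at $u$ to a backbone path $P_{\ell+1}$ on vertices $0,1,\dots,\ell$, the two pendent paths being the two arms of that backbone; the glueing vertex is $v_0=q$ for $H_1$ and $v_0=q+1$ for $H_2$.

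Since $M_k(H)=\operatorname{tr}\big(A(H)^k\big)$ counts closed walks, I would read a closed walk of $H$ as a closed walk $W'$ of $P_{\ell+1}$ carrying, at each of its visits to $v_0$, an inserted closed walk of $G$ based at $u$; separating off the closed walks living entirely inside $G-u$ (which decompose at $u$ into a return part in $G-u$ and a first-passage-to-$u$ part, both determined by $G$ alone) then gives
$$
\mathcal M_H(t)=C_1(t)+C_2(t)\,\Gamma_H(u;t)+\sum_{v\in V(P_{\ell+1})}\Gamma_H(v;t),\qquad \Gamma_H(v;t)=\sum_{W'\text{ based at }v}t^{|W'|}\,\gamma(t)^{\,r_{v_0}(W')},
$$
where $\gamma(t)=\sum_k M_k(G,u)\,t^k$, $r_{v_0}(W')$ is the number of visits of $W'$ to $v_0$, the inner sum runs over closed walks of $P_{\ell+1}$, and $C_1,C_2$ are nonnegative series depending only on $G$ — hence the same for $H_1$ and $H_2$. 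Subtracting, the theorem reduces to showing that
$$
\sum_{W'}t^{|W'|}\big(\gamma^{\,r_{q+1}(W')}-\gamma^{\,r_q(W')}\big)\qquad\text{and}\qquad \Gamma_{H_2}(u;t)-\Gamma_{H_1}(u;t)
$$
are both nonnegative (the first sum being over all closed walks of $P_{\ell+1}$), and that the first of them is nonzero.

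To handle these I would write $\gamma=1+\delta$ with $\delta$ nonnegative and — because $G$ has an edge — nonzero, expand $\gamma^{\,r}=\sum_i\binom{r}{i}\delta^{\,i}$, and reduce to comparing, for each $i\ge 1$, the generating functions $\sum_{W'}\binom{r_{q+1}(W')}{i}t^{|W'|}$ and $\sum_{W'}\binom{r_q(W')}{i}t^{|W'|}$, taken first over all closed walks of $P_{\ell+1}$ and then over those based at $v_0$. Slicing a closed walk along a chosen $i$-subset of its visits to a vertex $v$ into $i$ closed sub-walks at $v$ writes each of these generating functions as an explicit product of powers of $G_{vv}(t):=\big[(I-tA(P_{\ell+1}))^{-1}\big]_{vv}=\sum_k M_k(P_{\ell+1},v)\,t^k$ and of $\sum_k(k+1)M_k(P_{\ell+1},v)\,t^k$; all of these are nonnegative, so every such comparison follows from the single coefficientwise inequality $G_{q+1,q+1}(t)\ge G_{q,q}(t)$.

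That last inequality is exactly where the hypothesis $p\ge q+2$ enters. Writing $D_m(t)=\det(I-tA(P_m))$ (so $D_0=D_1=1$ and $D_m=D_{m-1}-t^2D_{m-2}$), one has the standard formula $\Gamma_{P_{\ell+1}}(v;t)=D_v(t)D_{\ell-v}(t)/D_{\ell+1}(t)$, and a short induction on $q$ from the recurrence gives $D_{q+1}D_{p-1}-D_qD_p=t^{2q+2}D_{p-q-2}$; therefore $G_{q+1,q+1}(t)-G_{q,q}(t)=t^{2q+2}D_{p-q-2}(t)/D_{\ell+1}(t)$, which is meaningful precisely because $p-q-2\ge 0$. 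Finally $D_{p-q-2}/D_{\ell+1}=\prod_{m=p-q-1}^{\ell+1}(D_{m-1}/D_m)$ is a product of path endpoint-return series $D_{m-1}/D_m=\sum_k M_k(P_m,\text{endpoint})\,t^k$, so it is nonnegative and, having constant term $1$, nonzero; this gives the comparison, and tracing the $i=1$ term through the expansion yields $M_{k'}(H_1)<M_{k'}(H_2)$ for some $k'$ (one may take $k'=2q+4$). I expect the real work to be bookkeeping rather than any single hard estimate: pinning down the decomposition that isolates the shared $G$-data as $C_1$, $C_2$, $\gamma$, and being careful with basepoint conventions when slicing closed walks along marked visits.
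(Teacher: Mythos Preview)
The paper does not prove this lemma --- it is quoted from~\cite{ilst10} without argument --- so there is no in-paper proof to compare against.

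Your generating-function route is correct. The decomposition $\mathcal M_H = C_1 + C_2\,\Gamma_H(u) + \sum_{v\in P_{\ell+1}}\Gamma_H(v)$ with $C_1,C_2$ depending only on $(G,u)$ is valid (walks based in $G-u$ either avoid $u$, contributing $C_1$, or factor as first-passage $\times\ \Gamma_H(u)\ \times$ last-exit, contributing $C_2\Gamma_H(u)$). After the binomial expansion of $\gamma^{r}$, slicing a closed walk at an $i$-subset of marked $v_0$-visits gives, for the based-at-$v_0$ sum, $R_{v_0}^2(R_{v_0}-1)^{i-1}$, and for the all-basepoints sum, $(R_{v_0}-1)^{i-1}S_{v_0}$, where $R_v=G_{vv}$ and $S_v=(tR_v)'=\sum_k(k+1)M_k(P_{\ell+1},v)\,t^k$; both are coefficientwise monotone in $R_{v_0}$, so everything does reduce to $R_{q+1}\ge R_q$. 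The determinant identity $D_{q+1}D_{p-1}-D_qD_p=t^{2q+2}D_{p-q-2}$ (which follows from $E_q^{(p)}=t^2E_{q-1}^{(p-1)}$ via the recurrence) together with the telescoping product of endpoint-return series $D_{m-1}/D_m$ then gives that inequality and the strictness at $k'=2q+4$. The one place where your anticipated bookkeeping really matters is the definition of $r_{v_0}$: for the insertion of $G$-excursions to be a bijection you must count $v_0$-occurrences at \emph{all} positions $0,1,\dots,|W'|$ (so a length-$k$ closed walk based at $v_0$ has $r_{v_0}\ge 2$ when $k\ge 1$); this is exactly what produces your factor $k+1$ rather than $k$, and with that convention the slicing bijections go through cleanly.

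Stylistically your method is quite different from the surrounding paper, which handles every closed-walk comparison (Propositions~\ref{pr-Case-III}, \ref{pr-central-closed-walks}, Lemma~\ref{le-coalescence}) by constructing explicit injections between sets of walks. Your approach trades those combinatorial maps for a single algebraic identity on path determinants; it is arguably cleaner for this particular lemma, while the injection style scales more readily to the multi-branch comparisons the paper needs later.
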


Csikv\'ari~\cite{cs10} further introduced a generalized tree shift $GTS$,
that generalizes a transformation introduced earlier by Kelmans~\cite{ke81},
and showed that for any tree~$T$ with $T\not\cong GTS(T)$ holds $T\prec GTS(T)$.
In the induced poset of the generalized tree shift,
the path is the unique minimal, while the star is the unique maximal element,
so that these two trees have, respectively, 
the smallest and the largest number of closed walks of any given length among trees with $n$~vertices.
Note that the latter result can be shown using Lemma~\ref{le-li-feng} as well.

Bollob\'as and Tyomkyn~\cite{boty12} extended Csikv\'ari's result to show that
the generalized tree shift also increases the number of all walks,
showing that the path and the star have, respectively,
the smallest and the largest number of all walks of any given length among trees with $n$~vertices.

Andriantiana and Wagner~\cite{anwa13} studied trees with a given degree sequence,
and proved that the so-called greedy trees have maximum number of closed walks of any given length among such trees,
although they do not have to be unique such trees.
They further showed that, if a degree sequence~$D_1$ majorizes another degree sequence~$D_2$,
then the greedy tree for $D_1$ has more closed walks of any given length than the greedy tree for $D_2$,
which implied a proof of the conjecture of Ili\'c and the author~\cite{ilst10}
about trees with the maximum number of closed walks
among trees with a given number of vertices and maximum vertex degree.

Let us now define the class of trees that we will mostly study here.
Let $P_n$ denote a path on $n$~vertices.

\begin{definition}
A graph~$G$ is a {\em starlike tree}
if for some positive integers $a_1,\dots,a_k$, $k\geq 3$,
it can be obtained from the union of paths $\bigcup_{i=1}^k P_{a_i+1}$
by identifying one end of each path to a single vertex~$u$,
so that $G-u=\bigcup_{i=1}^k P_{a_i}$.
Such starlike tree is denoted shortly as $S(a_1,\dots,a_k)$.
\end{definition}

The vertex~$u$ from the above definition is called the center of~$S(a_1,\dots,a_k)$,
while the constituting paths, whose lengths are $a_1,\dots,a_k$, are called branches.
The numbers $a_1,\dots,a_k$ form a partition of $n-1$,
and it is usual to order the lengths as $a_1\leq\cdots\leq a_k$.
We can extend the above definition to $k\leq 2$ as well, 
but both $S(n-1)$ and $S(l,n-l-1)$, $1\leq l\leq n-2$, are then
trivial starlike trees that are isomorphic to the path~$P_n$.

Successive application of Lemma~\ref{le-li-feng} directly provides 
an initial $\prec$ ordering of trees on $n$~vertices for $n\in\N$:
$$
P_n \prec S(1,1,n-3) \prec S(1,2,n-4) \prec \cdots 
    \prec S\left(1,\left\lfloor\frac{n-2}2\right\rfloor,\left\lceil\frac{n-2}2\right\rceil\right).
$$
A natural question is then {\em how far this initial ordering extends as a linear order?}
The answer is actually not too far,
as the presence of just two vertices of degree at least three
may lead to pairs of $\preceq$ incomparable graphs,
an example of which is depicted in~Fig.~\ref{fig-incomparable}.
\begin{figure}[ht]
\begin{center}
\includegraphics[scale=0.3]{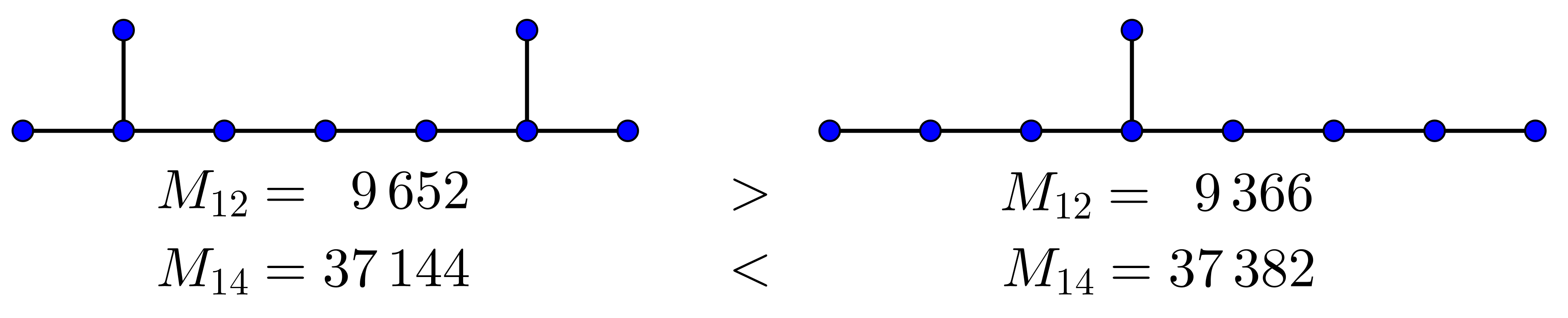}
\end{center}
\vspace{-12pt}
\caption{A pair of graphs incomparable by~$\preceq$.}
\label{fig-incomparable}
\end{figure}
Hence the most that one could expect is that 
$\preceq$ is a linear order among starlike trees,
and that is exactly what we will prove here. 
Moreover, our main result gives an easy way to compare starlike trees by~$\preceq$ 
using their branch lengths only, and without calculating their numbers of closed walks.
Recall now the definition of the shortlex order~\cite{si12}:
for two finite number sequences $a=(a_1,\dots,a_k)$ and $b=(b_1,\dots,b_l)$
we have $a<^\mathrm{lex}b$ 
if either $k<l$ or, 
when $k=l$, $a_i<b_i$ holds for the smallest index~$i$ at which the two sequences differ.
We can now state our main result.

\begin{theorem}
\label{th-main}
Let $1\leq a_1\leq\cdots\leq a_k$, $k\geq 3$, and $1\leq b_1\leq\cdots\leq b_l$, $l\geq 3$,
be two partitions of a natural number~$n$.
Then 
$$
S(a_1,\dots,a_k) \prec S(b_1,\dots,b_l)
\quad\Leftrightarrow\quad
(a_1,\dots,a_k)<^{\mathrm{lex}}(b_1,\dots,b_l).
$$
\end{theorem}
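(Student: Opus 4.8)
The plan is to reduce everything to a careful analysis of the generating function $\sum_{k\ge 0} M_k(S(a_1,\dots,a_k)) x^k$, which is (up to an elementary transformation) the reciprocal of the characteristic polynomial, and to exploit the fact that the characteristic polynomial of a starlike tree factors through the characteristic polynomials of paths. Concretely, if $\phi_m(x)$ denotes the characteristic polynomial of $P_m$ (a Chebyshev-like polynomial), then for $S(a_1,\dots,a_k)$ with $n$ vertices one has the classical formula
$$
\phi_{S}(x) = \prod_{i=1}^k \phi_{a_i}(x)\cdot\left(x - \sum_{i=1}^k \frac{\phi_{a_i-1}(x)}{\phi_{a_i}(x)}\right),
$$
so that the whole dependence on the branch partition is funneled through the multiset $\{a_1,\dots,a_k\}$ via the symmetric function $\sum_i \phi_{a_i-1}/\phi_{a_i}$. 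Since $M_k(G)=\sum_i\lambda_i^k$ is determined by $\phi_G$, the first step is to translate the desired comparison $S(\mathbf a)\preceq S(\mathbf b)$ into a coefficientwise comparison of the power series $G_{\mathbf a}(x)=\sum_k M_k x^k = -x\,\phi_{S(\mathbf a)}'(x)/\phi_{S(\mathbf a)}(x)$ expanded around $x=0$, or equivalently of $\sum_k M_k x^{2k}$ to stay on the safe (bipartite) side. I expect the cleaner route is to work with the \emph{walk} generating function attached to the center: decompose closed walks of $S(\mathbf a)$ according to how they interact with the center vertex $u$, obtaining a renewal-type identity in which each branch of length $a$ contributes a fixed "excursion series" $B_a(x)$ that is increasing in $a$ coefficientwise.

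Second, I would prove the single-branch monotonicity lemma that is really the engine: replacing a branch of length $a$ by a branch of length $a+1$ (keeping the other branches fixed) strictly increases every $M_k$. This is exactly the shape of Lemma~\ref{le-li-feng}, but I want it in a form that also handles the "carrying" that shortlex requires — i.e. comparing $S(\dots,a,a,\dots)$-type configurations whose sorted sequences are lexicographically adjacent but differ in several coordinates. The key observation is that the shortlex order on partitions of $n$ into at least three parts is generated, as a linear order, by two kinds of elementary moves: (i) $(\dots,a_j,\dots,a_k)\to(\dots,a_j+1,\dots,a_k-1)$ with $a_k\ge a_j+2$ among sequences of the same length — covered directly by Lemma~\ref{le-li-feng} applied with $G$ the starlike tree minus the two relevant pendant paths and $u$ its center — and (ii) passing from the lex-largest partition of $n$ into $k$ parts, namely $(1,\dots,1,n-k+1)$ reorganized, down to the lex-smallest partition of $n$ into $k+1$ parts; I will show the latter transition is also realized by a single application of Lemma~\ref{le-li-feng} (splitting off a pendant edge from the long branch is the inverse of an $(p,q)\to(p-1,q+1)$ move taken to its extreme). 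Granting these two facts, a straightforward induction along the shortlex chain gives the forward implication $(\mathbf a)<^{\mathrm{lex}}(\mathbf b)\Rightarrow S(\mathbf a)\prec S(\mathbf b)$.

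Third, the reverse implication is then essentially free: since $\preceq$ is reflexive and antisymmetric on this set once we know $\prec$ is a strict total order (distinct starlike trees have distinct spectra — this follows because the branch multiset is recoverable from $\phi_S$, e.g. via the largest branch being read off from the multiplicity structure or from $\phi_S$'s factorization over the $\phi_m$), the forward direction already shows that the map $\mathbf a\mapsto S(\mathbf a)$ is an order embedding of a total order into $(\,\cdot\,,\preceq)$, and an order embedding of a total order whose image is all of the relevant set is automatically an order isomorphism; hence $S(\mathbf b)\not\prec S(\mathbf a)$ whenever $(\mathbf a)<^{\mathrm{lex}}(\mathbf b)$, which gives the converse and simultaneously that $\preceq$ is linear on the set.

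The main obstacle, I expect, is step two's claim (ii): Lemma~\ref{le-li-feng} as stated requires $G$ to be connected with at least one edge and two \emph{pendant} paths, and at the length-changing boundary (going from $k$ to $k+1$ branches) one of the "paths" has length $0$ or $1$, so one must check the degenerate cases carefully — in particular that attaching a pendant path of length $0$ is the identity and of length $1$ genuinely adds a branch, so that the extreme move $G(u;p,0)\to G(u;p-1,1)$ is legitimate and strict. A secondary subtlety is making sure that along the shortlex chain we never leave the class of starlike trees with $k\ge 3$ branches (the trivial $k\le 2$ cases are all $\cong P_n$ and sit strictly below everything), and that the intermediate object in each elementary move still has its center as a vertex of degree $\ge 3$ so that Lemma~\ref{le-li-feng} applies with $u$ the center; handling the few small-$n$ exceptional partitions by direct computation will close that gap.
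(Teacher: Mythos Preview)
Your reduction to the shortlex chain is the right opening move, and Case~I (two parts $a_{k-1},a_k$ changing) is indeed just Lemma~\ref{le-li-feng}. But your step two contains a genuine gap: the claim that consecutive partitions of the same length in shortlex are \emph{always} related by a single two-part balancing move $(\dots,a_j,\dots,a_k)\to(\dots,a_j+1,\dots,a_k-1)$ is false. Take $n=10$, $k=3$: the successor of $(1,4,5)$ is $(2,2,6)$, where three parts change simultaneously (this is the paper's Case~II). Moreover, $S(1,4,5)\prec S(2,2,6)$ cannot be obtained by any chain of Li--Feng moves, because every Li--Feng move takes you to a partition that is \emph{later} in shortlex than $(2,2,6)$: from $(1,4,5)$ you can only reach $(2,3,5)$ or $(2,4,4)$, and from $(2,2,6)$ you can only reach $(2,3,5)$ upward, never $(1,4,5)$ downward. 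So Lemma~\ref{le-li-feng} is simply not strong enough to bridge this step; the paper devotes all of Section~\ref{sc-4} to it, using a coalescence lemma, a factorization of the characteristic polynomial (your first paragraph's formula, in fact) to cancel common spectrum, and a separate injective embedding for closed walks from the center.

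Your claim~(ii) has the same problem. The successor of the lex-maximal $k$-part partition (the near-equal one, e.g.\ $(3,3,4)$ for $n=10$) is $(1,\dots,1,n-k)$ with $k+1$ parts, e.g.\ $(1,1,1,7)$. A single Li--Feng move with $q=0$ on $S(3,3,4)$ gives $S(1,3,3,3)$, not $S(1,1,1,7)$; and again no chain of Li--Feng moves can reach $(1,1,1,7)$ from $(3,3,4)$ while moving upward in~$\prec$, since $(1,1,1,7)$ is shortlex-\emph{smaller} than every $4$-part partition you can produce this way. The paper handles this transition (Case~III, Proposition~\ref{pr-Case-III}) by an explicit injective embedding of closed walks, not by Lemma~\ref{le-li-feng}. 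Your generating-function setup in the first paragraph is promising and does get used in the paper's Case~II argument, but it does not by itself replace the missing combinatorial work; you would still need to prove the analogues of Propositions~\ref{pr-all-closed-walks} and~\ref{pr-central-closed-walks}.
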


Before we start proving the main theorem, let us demonstrate its usefulness on an example.
Following the method described in~\cite[Section 2.2]{st15},
it is easy to show that for fixed~$k$
the spectral radius of $S(a_1,\dots,a_k)$ tends to $\frac{k}{\sqrt{k-1}}$,
when all the branch lengths $a_1,\dots,a_k$ independently tend to infinity.
Eigenvalue calculations in Octave, which use LAPACK routines, for example,
yield numerically indistinguishable values
$$
\lambda_1(S(80,90,100)),\ \lambda_1(S(85,90,95)),\ \lambda_1(S(90,90,90))\approx 2.12132034355964,
$$
and also
$$
EE(S(80,90,100)),\ EE(S(85,90,95)),\ EE(S(90,90,90))\approx 616.507916871363,
$$
while we know from the order isomorphism of $\prec$ and~$<^{\mathrm{lex}}$, as announced above,
that the spectral radii are ordered as
$$
\lambda_1(S(80,90,100))\leq \lambda_1(S(85,90,95))\leq \lambda_1(S(90,90,90))
$$
and Estrada indices as
$$
EE(S(80,90,100)) < EE(S(85,90,95)) < EE(S(90,90,90)).
$$
The weak inequality between spectral radii above is implied by the appearance of limit in~(\ref{eq-lambda-limit}).
The strict inequality between spectral radii of these starlike trees holds as well,
as Oliveira et al.~\cite{olsttr18} have recently used the Jacobs-Trevisan diagonalization algorithm~\cite{jatr11}
to show that also
\begin{equation}
\label{eq-lex-lambda}
(a_1,\dots,a_k)<^{\mathrm{lex}}(b_1,\dots,b_l)
\quad\Leftrightarrow\quad
\lambda_1(S(a_1,\dots,a_k))<\lambda_1(S(b_1,\dots,b_l)).
\end{equation}
Note that Oboudi dealt with the same problem in~\cite{ob18}, 
where he showed that if $(a_1,\dots,a_k)$ is majorized by $(b_1,\dots,b_l)$
then $\lambda_1(S(a_1,\dots,a_k))\leq\lambda_1(S(b_1,\dots,b_l))$.
However, Oboudi proved weak inequalities only and 
did not provide a complete characterization as in~(\ref{eq-lex-lambda}).

Structure of the paper is as follows.
In Section~\ref{sc-2} we briefly discuss properties of the shortlex order
and use the analogue of the Li-Feng lemma to cover a simple part of the proof.
In Section~\ref{sc-3} we present a walk embedding that settles
the case of pairs of starlike trees with different numbers of branches.
Finally, Section~\ref{sc-4} contains the main part of the proof
that deals with the remaining case when Lemma~\ref{le-li-feng} is not applicable.

\section{Shortlex order of partitions and 
         the case with only two changing parts}
\label{sc-2}

A slight modification of the Hindenburg's iterative algorithm for generating partitions~\cite{hi79,knxx},
which consists in reversing the list of parts from nonincreasing into nondecreasing order,
shows that two partitions of~$n$:
$$
(a_1,\dots,a_k)\mbox{ with }a_1\leq\cdots\leq a_k 
\quad\mbox{and}\quad
(b_1,\dots,b_l)\mbox{ with }b_1\leq\cdots\leq b_l
$$
are consecutive in the shortlex order if one of the following cases occur:

\medskip\noindent
{\bf Case I:}\quad 
$l=k$, $a_{k-1}\leq a_k-2$.
Then 
$$
(b_1,\dots,b_{k-2},b_{k-1},b_k)=(a_1,\dots,a_{k-2},a_{k-1}+1,a_k-1).
$$

\medskip\noindent
{\bf Case II:}\quad 
$l=k$, $a_j\leq a_k-2$ for $1\leq j\leq k-2$ 
and $a_k-1\leq a_t\leq a_k$ for $t=j+1,\dots,k-1$.
Then 
\begin{align*}
(b_1,\dots,b_{j-1}, & b_j,   \dots, b_{k-1}, b_k)= \\
(a_1,\dots,a_{j-1}, & a_j+1, \dots, a_j+1,   \sum_{t=j}^{k} a_t - (k-j)(a_j+1)).
\end{align*}

\medskip\noindent
{\bf Case III:}\quad
$l=k+1$ and there exists $j$ such that
$$
a_1=\dots=a_j=\left\lfloor\frac nk\right\rfloor,
\quad
a_{j+1}=\dots=a_k=\left\lceil\frac nk\right\rceil,
$$
while
$$
b_1=\dots=b_k=1,
\quad 
b_{k+1}=n-k.
$$

Since $\leq^{\mathrm{lex}}$ is a linear order on the finite set of partitions of~$n$,
in order to prove Theorem~\ref{th-main}
it is enough to show that
\begin{equation}
\label{eq-consecutive}
S(a_1,\dots,a_k)\prec S(b_1,\dots,b_l)
\end{equation}
holds whenever $(b_1,\dots,b_l)$ is a successor of $(a_1,\dots,a_k)$ in the shortlex order,
according to the cases listed above.

To prove~(\ref{eq-consecutive}) for Case I in which only two parts change their values, 
we can use Lemma~\ref{le-li-feng} directly.

\begin{proposition}
\label{pr-Case-I}
Let $1\leq a_1\leq\cdots\leq a_k$, $k\geq 3$, be a partition of~$n$ 
such that $a_{k-1}\leq a_k-2$. Then
$$
S(a_1,\dots,a_{k-2},a_{k-1},a_k)\prec S(a_1,\dots,a_{k-2},a_{k-1}+1,a_k-1).
$$
\end{proposition}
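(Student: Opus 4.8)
The plan is to recognize that the transformation in Case~I is exactly the situation covered by Lemma~\ref{le-li-feng}, applied at the center of the starlike tree. Concretely, let $G$ be the subtree of $S(a_1,\dots,a_k)$ obtained by deleting the two longest branches entirely, keeping only the center~$u$ together with the branches of lengths $a_1,\dots,a_{k-2}$; that is, $G = S(a_1,\dots,a_{k-2})$ if $k\geq 5$, and $G$ is the path $P_{a_1+a_2+1}$ (rooted at the internal vertex $u$) when $k=4$, or just the single edge / single vertex rooted appropriately in the degenerate small cases. First I would check that $G$ is a connected graph with at least one edge containing the distinguished vertex~$u$, which holds in all cases since $k\geq 3$ and the $a_i$ are positive, so at least one branch of positive length is attached to $u$ (and when $k=3$ we may simply take $G$ to be that single branch together with $u$, i.e.\ the path $P_{a_1+1}$).

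Next I would observe that, in the notation of Lemma~\ref{le-li-feng}, attaching to~$u$ two further pendent paths of lengths $p$ and $q$ yields the graph $G(u;p,q)$, and that by construction
$$
S(a_1,\dots,a_{k-2},a_{k-1},a_k) = G(u;\,a_k,\,a_{k-1})
\quad\text{and}\quad
S(a_1,\dots,a_{k-2},a_{k-1}+1,a_k-1) = G(u;\,a_k-1,\,a_{k-1}+1).
$$
Since the hypothesis $a_{k-1}\leq a_k-2$ gives $a_k \geq a_{k-1}+2$, i.e.\ $p \geq q+2$ with $p=a_k$ and $q=a_{k-1}$, Lemma~\ref{le-li-feng} applies verbatim and yields
$$
G(u;a_k,a_{k-1}) \prec G(u;a_k-1,a_{k-1}+1),
$$
which is precisely the claimed strict inequality $S(a_1,\dots,a_{k-2},a_{k-1},a_k)\prec S(a_1,\dots,a_{k-2},a_{k-1}+1,a_k-1)$.

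The only real subtlety — and I expect this to be the main (though minor) obstacle — is the bookkeeping needed to see that the graph obtained from the abstract construction $G(u;p,q)$ of Lemma~\ref{le-li-feng} is genuinely isomorphic to the starlike tree with the stated branch multiset, since reattaching pendent paths at $u$ does not disturb the other branches and the center remains a vertex of degree $k$ (or becomes one of lower degree in the degenerate reductions, which is harmless as Lemma~\ref{le-li-feng} places no lower bound on $\deg u$). One should also note that the resulting partition $(a_1,\dots,a_{k-2},a_{k-1}+1,a_k-1)$ need not be sorted, but this is immaterial: $\preceq$ depends only on the isomorphism type, hence only on the multiset of branch lengths, so no sorting is required for the conclusion. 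With these remarks the proposition follows immediately.
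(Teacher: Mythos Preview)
Your proposal is correct and follows essentially the same approach as the paper: both identify $G=S(a_1,\dots,a_{k-2})$ (with the appropriate interpretation as a path when $k\le 4$), take $p=a_k$, $q=a_{k-1}$, and apply Lemma~\ref{le-li-feng} directly. Your added remarks about verifying that $G$ has at least one edge and that sorting of the resulting branch sequence is immaterial are sound and slightly more explicit than the paper's version, but the argument is the same.
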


\begin{proof}
In order to apply Lemma~\ref{le-li-feng}, simply observe that 
$S(a_1,\dots,a_{k-2},a_{k-1},a_k)$ and $S(a_1,\dots,a_{k-2},a_{k-1}+1,a_k-1)$
can be viewed as $G(u;p,q)$ and $G(u;p-1,q+1)$, respectively,
for $G\cong S(a_1,\dots,a_{k-2})$, $q=a_{k-1}$ and $p=a_k$,
with the vertex $u$ determined as follows:
\begin{itemize}
\item if $k\geq 5$, $S(a_1,\dots,a_{k-2})$ is a proper starlike tree, 
      so that $u$ is its central vertex;
\item if $k=4$, $S(a_1,a_2)$ denotes the path $P_{a_1+a_2+1}$, 
      so that $u$ should be taken as a vertex at distance~$a_1$ from one of its leaves;
\item if $k=3$, $S(a_1)$ denotes the path $P_{a_1+1}$,
      so that $u$ should be taken as one of its leaves.
      \qedhere
\end{itemize}
\end{proof}

Case III is proved in the following section,
while Case II is proved in Section~\ref{sc-4}.

\section{Starlike trees with different degrees of central vertices}
\label{sc-3}

Here we use an interesting embedding of closed walks to prove Case III.
Although this case requests to compare
$S\left(\left\lfloor\frac nk\right\rfloor,\dots,\left\lfloor\frac nk\right\rfloor,
        \left\lceil\frac nk\right\rceil,\dots,\left\lceil\frac nk\right\rceil\right)$
to~$S(1,\dots,1,n-k)$ only,
the proof is applicable to all starlike trees with $k$~branches and we state it in that form.

\begin{proposition}
\label{pr-Case-III}
Let $1\leq a_1\leq\cdots\leq a_k$, $k\geq 3$, be an arbitrary partition of~$n$.
Then
$$
S(a_1,\dots,a_k)\prec S(\underbrace{1,\dots,1}_{k\ \mathrm{branches}},n-k).
$$
\end{proposition}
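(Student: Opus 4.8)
The plan is to construct, for each length~$k$, an injective map from the set of closed walks of length~$k$ in $S(a_1,\dots,a_k)$ into the set of closed walks of the same length in the "broom" $B:=S(\underbrace{1,\dots,1}_{k},n-k)$, and then to exhibit at least one length at which the map fails to be surjective. Write $S:=S(a_1,\dots,a_k)$ with center~$u$ and branches $R_1,\dots,R_k$ of lengths $a_1,\dots,a_k$; write the long branch of $B$ as $v_0=u', v_1,\dots,v_{n-k}$ and its $k$ pendant leaves as $w_1,\dots,w_k$. The key structural observation is that a closed walk $W\colon x_0,\dots,x_k=x_0$ in the tree~$S$ visits at most one of the branches $R_i$ outside the center~$u$ at a time, but may wander into several different branches over its course; each maximal sub-excursion away from~$u$ lives entirely inside a single path $R_i$, i.e.\ inside a path graph. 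The idea is to map such an excursion along $R_i$ to an excursion of the same shape along the long branch of~$B$, reattaching it at~$u'$, while the ``turning points'' at~$u$ itself are rerouted through the pendant leaves $w_1,\dots,w_k$ to keep the walk closed and to record which branch was originally visited.

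Concretely, I would proceed as follows. First I would decompose an arbitrary closed walk $W$ in $S$ at its visits to the center~$u$: this writes $W$ as a concatenation $u, E_1, u, E_2, u, \dots, u, E_m, u$ where each block $E_j$ is either empty or is a walk that leaves~$u$ into one branch $R_{i_j}$, stays inside $R_{i_j}$, and returns to~$u$ for the first time at the end of the block (so $E_j$ corresponds to a closed-walk excursion of a path starting at its endpoint). Second, since each branch $R_{i_j}$ embeds as an initial segment of the long branch $v_1,\dots,v_{n-k}$ of~$B$ (because $a_{i_j}\le n-k$, as $a_{i_j}$ is one part of a partition of $n$ with at least $k\ge 3$ parts, forcing $a_{i_j}\le n-k+1-1 = n-k$... more carefully, $a_{i_j}\le n-(k-1)=n-k+1$, and equality is possible only in degenerate cases; I would check that $a_{i_j}\le n-k$ holds whenever there are genuinely $k\ge 3$ positive parts), I can replace each excursion $E_j$ into $R_{i_j}$ by the ``same'' excursion into $v_1,\dots,v_{a_{i_j}}\subseteq v_1,\dots,v_{n-k}$. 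Third, I would reroute each nonempty block: instead of $u,E_j,u$ I use $u', w_{i_j}$ only when $E_j$ is the trivial length-$2$ step into the leaf-end of its branch — and in general I would instead keep the long-branch excursion but insert, at the moments $W$ stays at~$u$ between two blocks, a detour $u'\to w_{i}\to u'$ that encodes the index~$i$ of the branch entered by the \emph{next} nonempty block. The bookkeeping must be arranged so that the total length is preserved and so that from the image walk one can read off the sequence of branch indices $(i_1,\dots)$ and the internal shape of each $E_j$, which is exactly what guarantees injectivity.

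The main obstacle — and the step I expect to require the most care — is designing the reattachment/rerouting so that it is simultaneously \emph{length-preserving} and \emph{injective}, since naively diverting through a leaf $w_i$ costs two extra steps that have to be ``paid for'' somewhere, and one must ensure two different walks in $S$ (differing only in \emph{which} branch of equal length was used, or in how excursions of equal shape were interleaved) are not collapsed to the same walk in~$B$. I would handle this by encoding the branch index into the \emph{parity/pattern} of leaf-visits rather than by adding steps: when $W$ sits at $u$ for a stretch, it is already ``spending'' steps $u\to(\text{nbr})\to u$, and in $B$ I can spend the same steps as $u'\to w_i\to u'$, choosing $i$ to be the index needed; a stretch of $2t$ idle steps at $u$ becomes $t$ visits to prescribed leaves, which carries more than enough information. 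Once injectivity is established we get $M_k(S)\le M_k(B)$ for all~$k$. Finally, to upgrade $\preceq$ to $\prec$ I would exhibit a specific closed walk present in~$B$ but not in the image: for instance, for $k=2(n-k)$ the broom~$B$ has a closed walk that traverses its entire long branch out and back, whose length strictly exceeds twice the longest branch of $S$ unless $a_k=n-k$, a case one disposes of directly by noting the two trees are then non-isomorphic and comparing, say, $M_4$ (the count of $P_3$-subgraphs / sum of $\binom{d_v}{2}$), which is strictly larger for~$B$ because its center has degree~$k\ge 3$ while all of $S$'s internal structure contributes strictly fewer such paths when at least two branches have length $\ge 2$; the one leftover configuration $S(1,\dots,1,n-k)=B$ is excluded by hypothesis since then the partitions coincide and there is nothing to prove.
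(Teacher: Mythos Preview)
Your high-level plan is the paper's: decompose closed walks at visits to the center, push each branch excursion into the long branch of the broom, and use the pendant leaves $w_i$ to record the branch index. But you have not found the mechanism that makes this length-preserving, and your attempts to locate ``idle steps at $u$'' rest on a misconception --- a walk never sits still, so there are no spare steps to repurpose. The missing idea is simple once seen: each excursion $u,E_j,u$ in $S$ has the form $u,v_{i_j},W_j,v_{i_j},u$, where $W_j$ is a closed walk in the branch $R_{i_j}$ \emph{based at $v_{i_j}$}, not at $u$. Map $v_{i_j}\mapsto u'$, so that $W_j$ becomes a closed walk along the long branch based at $u'$ (this fits because only $a_{i_j}-1\le n-k$ is needed, dissolving your worry about whether $a_{i_j}\le n-k$), and repurpose the two boundary steps $u\!\to\!v_{i_j}$ and $v_{i_j}\!\to\!u$ as the leaf detour $u'\!\to\!w_{i_j}\!\to\!u'$. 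The length is preserved exactly and the branch index is recorded for free; no accounting problem remains.

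Two further gaps. First, closed walks that never visit $u$ are absent from your decomposition; the paper handles them by a \emph{different} embedding, partitioning the long branch into edge-disjoint subpaths of lengths $a_1-1,\dots,a_k-1$ (these sum to $n-k$) and translating each such walk into its assigned subpath. Second, your non-surjectivity argument conflates ``$S$ has no closed walk of length $2(n-k)$'' (false --- $S$ has many) with ``this particular walk of $B$ is not in the image of the injection'' (what is actually needed); once the injection is built as above, the latter follows directly because no image walk can contain two long-branch vertices at distance $\ge\max_i a_i$ apart.
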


\begin{proof}
Let $u$ be the center of~$S(a_1,\dots,a_k)$.
For $i=1,\dots,k$, 
let $v_i$ be the neighbor of~$u$ in the branch of length~$a_i$ and
let $B_i$ denote the branch containing~$v_i$, 
but without the vertex~$u$ and the edge~$uv_i$
(see Fig.~\ref{fig-C}).
Hence $B_i$ has length $a_i-1$.
Similarly, let $u'$ be the center of $S(1,\dots,1,n-k)$ and
let $v'_1,\dots,v'_k$ be the neighbors of~$u'$ 
that form branches of length one.
Let $B'$ denote the subpath $S(1,\dots,1,n-k)-\{v'_1,\dots,v'_k\}$,
which contains~$u'$ and has length $n-k$. 

\begin{figure}
\begin{center}
\includegraphics[scale=0.5]{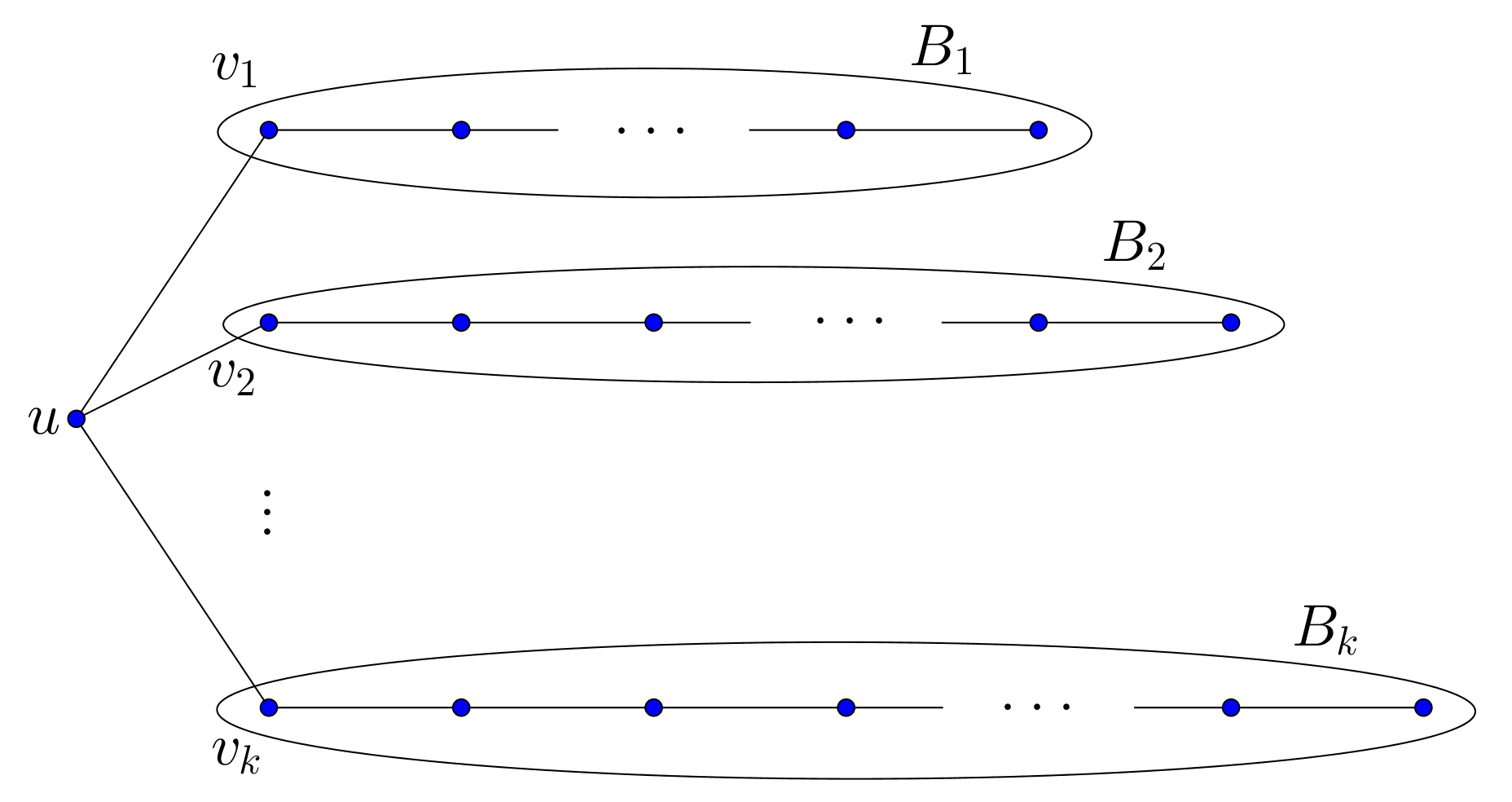}
\quad
\includegraphics[scale=0.5]{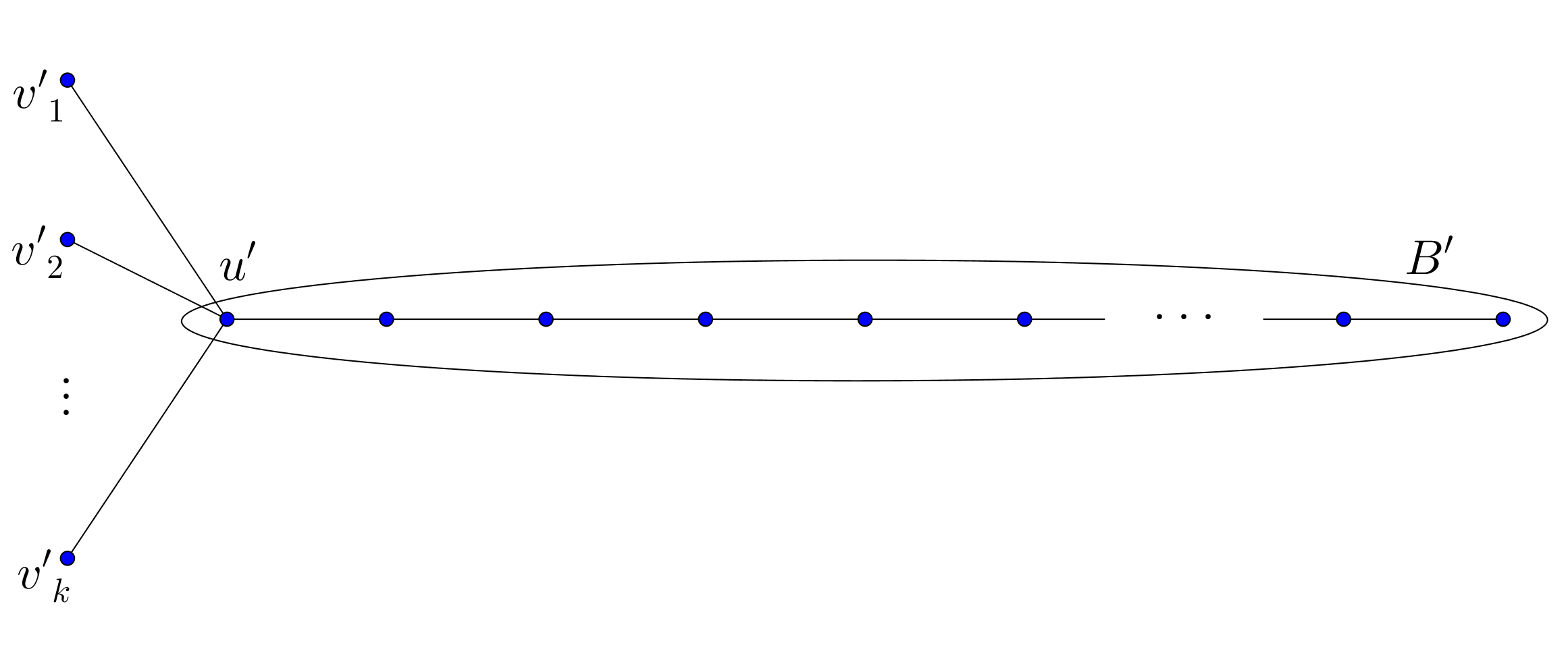}
\end{center}
\caption{Starlike trees $S(a_1,\dots,a_k)$ and $S(1,\dots,1,n-k)$.}
\label{fig-C}
\end{figure}

Closed walks of $S(a_1,\dots,a_k)$ may be classified into the following three types:

\medskip
{\em Type i)}\quad
closed walks that do not contain any of the edges $uv_1$,\dots,$uv_k$;

\medskip
{\em Type ii.a)}\quad
closed walks that start and end at~$u$;

\medskip
{\em Type ii.b)}\quad
closed walks that contain~$u$, but do not start at~$u$.

\medskip
We will now construct an injective embedding~$F$ 
from the set of closed $l$-walks of $S(a_1,\dots,a_k)$
into the set of closed $l$-walks of $S(1,\dots,1,n-k)$, 
according to the above walk types.

\medskip
{\em Type i)}\quad
Note that the total number $\sum_{i=1}^k (a_i-1)$ of edges in subpaths $B_1,\dots,B_k$
is equal to the number of edges in the subpath $B'$ of $S(1,\dots,1,n-k)$.
We can thus partition the edges of~$B'$ 
into edge-disjoint subpaths $B'_1,\dots,B'_k$
such that $B'_i$ has length $a_i-1$, $i=1,\dots,k$.
Let $g_i$ be an isometric embedding that maps~$B_i$ to~$B'_i$ for $i=1,\dots,k$.

Now, let $W\colon w_0,w_1,\dots,w_l$ be a closed $l$-walk of $S(a_1,\dots,a_k)$ that
does not contain any of the edges $uv_1,\dots,uv_k$.
This implies that $W$ fully belongs to a subpath~$B_i$ for some $1\leq i\leq k$.
We now set $F(W)$ to be the walk $g_i(w_0),g_i(w_1),\dots,g_i(w_l)$,
so that $W$~is essentially translated from $B_i$ to~$B'_i$.
Hence $F$ bijectively maps
closed $l$-walks of $S(a_1,\dots,a_k)$ 
that fully belong to some~$B_i$, $1\leq i\leq k$,
into closed $l$-walks of $S(1,\dots,1,n-k)$
that fully belong to the corresponding~$B'_i$.

\medskip
{\em Type ii.a)}\quad
For $i=1,\dots,k$
let $h_i$ be an isometric embedding that maps $B_i$ to~$B'$ such that $h_i(v_i)=u'$.
Thus each $B_i$ is mapped by~$h_i$ to the initial part of~$B'$ of the same length.

Now, let $W$ be a closed $l$-walk of $S(a_1,\dots,a_k)$ that starts and ends at~$u$.
For some~$m$ and $i_1,\dots,i_m\in\{1,\dots,k\}$
it has the form
$$
W\colon u,W_1,u,W_2,u,\dots,u,W_m,u
$$
where $W_j$ is a subwalk fully contained in~$B_{i_j}$, $j=1,\dots,m$.
Hence $W_j$ starts and ends with~$v_{i_j}$,
so that $h_{i_j}(W_j)$ is an isometric copy of~$W_j$ in~$B'$ that starts and ends at~$u'$.
Now set
$$
F(W)\colon u',v'_{i_1},h_{i_1}(W_1),v'_{i_2},h_{i_2}(W_2),\dots,v'_{i_m},h_{i_m}(W_m).
$$
Thus $F(W)$ is obtained 
by replacing the first edge~$uv_{i_j}$ and the last edge~$v_{i_j}u$ 
from each subwalk~$u,W_j,u$
with a pair of edges $u'v'_{i_j}$, $v'_{i_j}u'$ followed by a copy of~$W_j$ in~$B'$.
This enables easy reconstruction of~$W$ from~$F(W)$
as occurrences of vertices $v'_1$, \dots, $v'_l$ in~$F(W)$ serve to
extract subwalks $h_{i_1}(W_1)$, \dots, $h_{i_m}(W_m)$ from~$F(W)$.
Each subwalk~$h_{i_j}(W_j)$ is isometrically mapped back to the subwalk~$W_j$
in the branch of $S(a_1,\dots,a_k)$ 
that corresponds to its leading $v'_{i_j}$ vertex,
which shows injectivity of~$F$ in this case.

\medskip
{\em Type ii.b)}\quad
Finally, let $W$ be a closed $l$-walk of~$S(a_1,\dots,a_k)$
that contains~$u$, but does not start at it.
It has the form
$$
W\colon W_0,u,W_1,u,\dots,u,W_m,u,W_{m+1}
$$
for some $m$ and $i_0,\dots,i_{m+1}\in\{1,\dots,k\}$
such that a subwalk~$W_j$ is fully contained in~$B_{i_j}$, $j=0,\dots,m+1$.
Note that since $W$ is closed, 
$W_0$ and $W_{m+1}$ belong to the same branch $B_{i_0}=B_{i_{m+1}}$.
Similar to the previous case, set
$$
F(W)\colon h_{i_0}(W_0),v'_{i_1},
           h_{i_1}(W_1),v'_{i_2},\dots,v'_{i_m},
           h_{i_m}(W_m),v'_{i_{m+1}}, 
           h_{i_{m+1}}(W_{m+1}).
$$
Again, occurrences of vertices $v'_1,\dots,v'_k$ in~$F(W)$ serve
to delimit the subwalks $h_{i_0}(W_0)$, \dots, $h_{i_{m+1}}(W_{m+1})$ in~$F(W)$.
One can then reconstruct the original walk~$W$
by isometrically mapping back each subwalk $h_{i_j}(W_j)$ 
to the subwalk~$W_j$ in the branch of~$S(a_1,\dots,a_k)$ 
that corresponds to its leading $v'_{i_j}$ vertex,
where $h_{i_0}(W_0)$ is mapped back to the same branch~$B_{i_{m+1}}$ as $h_{i_{m+1}}(W_{m+1})$.
Hence $F$ is injective in this case as well.

It is easily seen that $F$ is actually injective over its whole domain,
as the closed $l$-walks of $S(a_1,\dots,a_k)$ of different types above 
get mapped by~$F$ to disjoint subsets of $l$-walks of~$S(1,\dots,1,n-k)$:
\begin{itemize}
\item
walks of type i) are mapped to closed walks 
that do not contain any of the edges~$u'v'_j$,
\item
walks of type ii.a) are mapped to closed walks
that contain at least one of the edges~$u'v'_j$ and start at~$u'$,
while
\item
walks of type ii.b) are mapped to closed walks
that contain at least one of the edges $u'v'_j$ and do not start at~$u'$.
\end{itemize}

On the other hand, $F$ is not surjective
as it embeds closed walks either into disjoint parts of~$B'$
or into initial parts of~$B'$ together with the edges $u'v'_1,\dots,u'v'_k$.
Hence no closed walk of~$S(a_1,\dots,a_k)$ may be mapped by~$F$
to a closed walk of~$S(1,\dots,1,n-k)$ that
contains two vertices of~$B'$ that are at distance at least $\max_i a_i$ apart.
\end{proof}

\section{The case with at least three changing parts}
\label{sc-4}

In this section we settle the remaining Case II.
\begin{proposition}
\label{pr-Case-II}
Let $1\leq a_1\leq\cdots\leq a_k$, $k\geq 3$, be a partition of $n$
such that $a_j\leq a_k-2$ for some $j\leq k-2$ and
$a_t\in\{a_k-1,a_k\}$ for $t=j+1,\dots,k-1$.
Then
\begin{eqnarray*}
&     & S(a_1,\dots,a_{j-1},a_j,\dots,a_k) \\
&\prec& S(a_1,\dots,a_{j-1},a_j+1,\dots,a_j+1,\sum_{t=j}^k a_t-(k-j)(a_j+1)).
\end{eqnarray*}
\end{proposition}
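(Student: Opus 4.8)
Looking at this problem, I need to prove Proposition~\ref{pr-Case-II}, which handles the "Case II" transformation where at least three branch lengths change. Let me think about the structure.

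The plan is to imitate the proof of Proposition~\ref{pr-Case-III}: build, for every $\ell\ge 0$, an injective map $F$ from the closed $\ell$-walks of $S(a_1,\dots,a_{j-1},a_j,\dots,a_k)$ into those of $S(a_1,\dots,a_{j-1},a_j+1,\dots,a_j+1,b_k)$. Injectivity yields $M_\ell(\text{source})\le M_\ell(\text{target})$ for all $\ell$, and exhibiting a single length for which $F$ misses a walk upgrades this to $\prec$. I expect a useful first move to be to shave off a unit with Lemma~\ref{le-li-feng} — transferring one unit from $a_k$ to $a_j$ is legal since $a_k\ge a_j+2$ — so that the shortest changing branch already has the target length $a_j+1$ and may be folded into an enlarged common base $\tilde B=S(a_1,\dots,a_{j-1},a_j+1)$; what then remains is to compare, relative to $\tilde B$ and its centre $u$, the attachment of the $k-j$ ``donor'' paths $P_{a_{j+1}},\dots,P_{a_{k-1}},P_{a_k-1}$ with the attachment of $k-j-1$ ``marker'' paths $P_{a_j+1}$ and one long path $P_{b_k}$. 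By the Case~II hypothesis the donor lengths all lie within $1$ of $L:=a_k$, which is the structural fact the construction must exploit, even though several branches are still being rearranged.

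As in Proposition~\ref{pr-Case-III} I would cut each closed walk of the first tree at its visits to $u$ into excursions — into $\tilde B$, which are translated verbatim, and into the donor branches, which must be re-routed into the target's new branches — and I would use visits to the first vertices of the marker paths, inserted between consecutive donor excursions, to record which donor branch each excursion came from. The essential difficulty, and the reason this case deserves its own section, is that here the clean type-preserving bookkeeping of Proposition~\ref{pr-Case-III} does not survive: the marker paths have length only $a_j+1<L$, and the long path $P_{b_k}$ is itself too short to receive edge-disjoint isometric copies of all the donor branches (already $b_k<\sum_{t=j+1}^{k}a_t$ once $k-j\ge 2$), so some donor excursions are forced to be sent to target walks that themselves pass through $u$. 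One therefore needs a more global encoding in which the walk's sequence of donor-branch visits is reconstructed jointly from the marker-vertex visits and from the depths reached inside $P_{b_k}$ and the markers — and this encoding must respect the length $\ell$ exactly. Designing it so that decoding is unambiguous (the near-equality of the donor lengths is exactly what makes the depth information usable: an excursion allotted to one ``slot'' cannot be confused with one allotted to a neighbouring slot) is the step I expect to be the main obstacle; a careful separate treatment of the excursions that stay inside $\tilde B$ and of the branch obtained from the Lemma~\ref{le-li-feng} normalization will also be needed.

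Once the embedding is in place, strictness should follow as in Proposition~\ref{pr-Case-III}: the image walks never reach beyond a bounded depth into $P_{b_k}$, so the length-$2b_k$ closed walk that runs to the far end of $P_{b_k}$ and back lies outside the image, giving $M_{2b_k}(\text{source})<M_{2b_k}(\text{target})$ and hence $\prec$. If the embedding turns out to be too unwieldy, a fallback is to work with the closed form $\phi(S(a_1,\dots,a_k),z)=\big(\prod_{i=1}^{k}p_{a_i}(z)\big)\big(z-\sum_{i=1}^{k}p_{a_i-1}(z)/p_{a_i}(z)\big)$ of the characteristic polynomial, where $p_m$ are the path polynomials $p_0=1$, $p_{-1}=0$, $p_m=zp_{m-1}-p_{m-2}$, and to deduce the coefficientwise inequality between the spectral-moment generating functions $\sum_{\ell\ge 0}M_\ell(S)x^\ell=\frac1x\,\phi'(S,1/x)/\phi(S,1/x)$ from an analysis of how this rational function changes under the Case~II move; but extracting a coefficientwise comparison from that identity looks no easier than the embedding, so I would pursue the embedding first.
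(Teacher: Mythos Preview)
Your opening Li--Feng move is fatal. Transferring a unit from $a_k$ to $a_j$ produces the intermediate $T' = S(a_1,\dots,a_{j-1},a_j{+}1,a_{j+1},\dots,a_{k-1},a_k{-}1)$, and Lemma~\ref{le-li-feng} does give $S(a_1,\dots,a_k)\prec T'$. But to continue you would need $T'\preceq S(a_1,\dots,a_{j-1},\underbrace{a_j{+}1,\dots,a_j{+}1}_{k-j},f)$, and this is \emph{false} in general. Take $j=1$, $k=4$, $(a_1,\dots,a_4)=(1,4,4,4)$: the target is $S(2,2,2,7)$ while your intermediate is $S(2,3,4,4)$, and a chain of Li--Feng moves already gives $S(2,2,2,7)\prec S(2,2,3,6)\prec S(2,2,4,5)\prec S(2,3,4,4)$. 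The shave overshoots the target, so the ``common base'' $\tilde B=S(a_1,\dots,a_{j-1},a_j{+}1)$ is not actually shared by the two trees you must compare, and the remaining donor-versus-marker comparison runs the wrong way.

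The paper avoids this trap by not attempting a single global walk embedding. It first proves a coalescence lemma (Lemma~\ref{le-coalescence}): if $H_1\preceq H_2$ and $M_k(H_1,v_1)\le M_k(H_2,v_2)$ for all~$k$, then $G(u{=}v_1)H_1\preceq G(u{=}v_2)H_2$. This peels off the genuine common prefix $S(a_1,\dots,a_{j-1})$ and reduces the problem to two separate comparisons between $H_1=S(a,b^p,(b{+}1)^q)$ and $H_2=S((a{+}1)^{p+q},f)$: one for all closed walks, one for closed walks at the centre. The centred comparison \emph{is} done by a walk embedding very much in the spirit of your plan---excursions into the long branches are swapped with excursions into the short branch of length~$a$, with marker vertices recording the branch index---and it works precisely because restricting to walks based at the centre removes the bookkeeping difficulty you identified. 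The comparison of \emph{all} closed walks, however, is handled algebraically: Schwenk's edge-deletion formula factors the characteristic polynomials and shows that $S(a,(b{+}1)^{p+q})$ and $S((a{+}1)^{p+q},b)$ share the factor $\bm{P}_{a+b+2}-(p{+}q{-}1)\bm{P}_a\bm{P}_b$, so the difference of their $k$-th moments collapses to the exact identity $(p{+}q{-}1)\big(M_k(P_{b+1})-M_k(P_{a+1})\big)$; combining this with simple path-extension inequalities then absorbs the passage to branch lengths $b$, $b{+}1$ and~$f$. Your fallback is therefore closer to what the paper does than your primary plan---the coefficientwise comparison becomes tractable once that common factor is found, and that algebraic cancellation is exactly what stands in for the edge-disjoint embedding you correctly observed is unavailable.
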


To simplify notation, denote by $a=a_j$ and $b=a_k-1$
and let $p$ and $q$ be such that
$a_{j+1}=\dots=a_{j+p}=b$, while $a_{j+p+1}=\dots=a_{j+p+q}=b+1$,
where $j+p+q=k$.
Further, let
$$
f=\sum_{t=j}^k a_t-(k-j)(a_j+1)
=(p+q-1)(b-a) + b-p.
$$
The statement to be proved now becomes
\begin{eqnarray}
\nonumber
&& S(a_1,\dots,a_{j-1},a,\underbrace{b,\dots,b}_{p\ \mathrm{times}},
                         \underbrace{b+1,\dots,b+1}_{q\ \mathrm{times}}) \\
\label{eq-Case-III}
&\prec& S(a_1,\dots,a_{j-1},\underbrace{a+1,\dots,a+1}_{p+q\ \mathrm{times}},f).
\end{eqnarray}
Note that $f\geq b+1$ unless $b-a=1$ and $q=1$,
%
%
%
%
in which case $f=b$ and~(\ref{eq-Case-III}) reduces to
$$
S(a_1,\dots,a_{j\!-\!1},a,\underbrace{a\!+\!1,\dots,a\!+\!1}_{p\ \mathrm{times}},
                          a+2)\prec
S(a_1,\dots,a_{j\!-\!1},\underbrace{a\!+\!1,\dots,a\!+\!1}_{p+2\ \mathrm{times}}),
$$
that follows directly from Lemma~\ref{le-li-feng}
applied to the branches of lengths $a$ and~$a+2$ in the first starlike tree.
Hence we can assume that $f\geq b+1$ in the sequel.

To prove~(\ref{eq-Case-III}) in the general case,
we will need a lemma on closed walks in coalescences of graphs.
Recall that a {\em coalescence} of two vertex disjoint graphs $G$ and~$H$,
with respect to a vertex~$u$ of~$G$ and a vertex~$v$ of~$H$,
denoted by $G(u=v)H$, is obtained from the union of $G$ and~$H$
by identifying vertices $u$ and~$v$.

\begin{lemma}
\label{le-coalescence}
Let $G$, $H_1$ and $H_2$ be three vertex disjoint graphs,
and let $u$ be a vertex of~$G$, $v_1$ a vertex of $H_1$ and $v_2$ a vertex of~$H_2$.
If $H_1\preceq H_2$ and $M_k(H_1,v_1)\leq M_k(H_2,v_2)$ for all $k\geq 0$,
then
$$
G(u=v_1)H_1 \preceq G(u=v_2)H_2.
$$
Strict inequality holds if either $H_1\prec H_2$ or $M_k(H_1,v_1)<M_k(H_2,v_2)$ for some~$k$.
\end{lemma}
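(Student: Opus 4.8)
The plan is to count closed walks in the coalescence $G(u=v_i)H_i$ by classifying them according to how they interact with the identified vertex. First I would fix a length $k$ and observe that every closed walk $W$ of length $k$ in $G(u=v_i)H_i$ either avoids the identification vertex entirely, in which case it lies wholly in $G-($copy of $u)$ extended appropriately, or in $H_i$; or it visits that vertex. Writing $w$ for the identified vertex, a closed walk that touches $w$ factors uniquely, by cutting at each visit to $w$, into a cyclic concatenation of "excursions" $W=E_1E_2\cdots E_m$ where each $E_j$ is a walk from $w$ back to $w$ whose interior avoids $w$, and each such excursion lies entirely within $G$ or entirely within $H_i$. The key quantitative statement is the convolution identity
$$
M_k\bigl(G(u=v_i)H_i\bigr)=M_k(G-u)+M_k(H_i-v_i)+\sum_{\substack{r+s=k\\ r\ge 0,\ s\ge 0}} \bigl(\text{closed walks at }u\text{ in }G\text{ of length }r\text{ gluing with those at }v_i\text{ of length }s\bigr),
$$
which more precisely, after bookkeeping the excursions, expresses $M_k$ of the coalescence in terms of the \emph{rooted} generating quantities $M_r(G,u)$ and $M_s(H_i,v_i)$ together with terms that do not depend on $H_i$ at all (those coming from walks that never reach $w$, which contribute $M_k(G')$ for the "$G$-side minus root" part and $M_k(H_i')$ for the analogous $H_i$-part).

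The cleanest way to make this rigorous is via generating functions: for a rooted graph $(X,x)$ let $W_X(t)=\sum_{k\ge 0} M_k(X,x)\,t^k$ be the walk generating function at the root, and let $N_X(t)$ collect the contributions of walks avoiding $x$. A standard transfer-matrix / first-return decomposition gives, for the coalescence,
$$
\sum_{k\ge 0} M_k\bigl(G(u=v_i)H_i\bigr)\,t^k \;=\; N_G^{\,\mathrm{rest}}(t) + N_{H_i}^{\,\mathrm{rest}}(t) + \Phi\bigl(W_G(t),W_{H_i}(t)\bigr)
$$
for an explicit function $\Phi$ that is \emph{coordinatewise monotone} in its second argument once the first is fixed, and whose power-series coefficients are all nonnegative. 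Concretely one gets $W_{G(u=v)H}(t)^{-1}=W_G(t)^{-1}+W_H(t)^{-1}-1$ for the rooted generating function (this is the classical coalescence formula for walk generating functions), and the total count is assembled from this together with the root-avoiding pieces $N$. Since $N_{H_i}^{\mathrm{rest}}(t)$ has coefficients $M_k(H_i-v_i)$, and these are dominated coefficientwise whenever $H_1\preceq H_2$ (because $H_i-v_i$ is an induced subgraph relationship is \emph{not} automatic — here one uses instead that $M_k(H_i-v_i)=M_k(H_i)-M_k(H_i,v_i)-(\text{mixed terms})$, so one must be a little careful), the monotonicity propagates.

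The main obstacle, and where the real work lies, is establishing the coefficientwise monotonicity of $\Phi$ in the $W_{H_i}$-slot together with controlling the root-avoiding term: one needs that the hypotheses $M_k(H_1)\le M_k(H_2)$ for all $k$ \emph{and} $M_k(H_1,v_1)\le M_k(H_2,v_2)$ for all $k$ jointly force $M_k(H_1-v_1)\le M_k(H_2-v_2)$ — this is not immediate since subtracting $M_k(H_i,v_i)$ could in principle reverse an inequality, so the decomposition must be arranged so that the "$H_i-v_i$ walks" appear directly rather than as a difference. I expect the right move is to phrase the whole count so that only $M_\bullet(H_i)$ and $M_\bullet(H_i,v_i)$ enter, each with nonnegative coefficients: a closed walk in $H_i$ either avoids $v_i$ (contributing to $M_\bullet(H_i)-M_\bullet(H_i,v_i)$... still a difference) — so instead I would route through the identity $M_k(H_i-v_i) + [\text{closed walks of }H_i\text{ meeting }v_i] = M_k(H_i)$ and note that "closed walks meeting $v_i$" is itself a nonnegative-coefficient convolution of copies of $M_\bullet(H_i,v_i)$ via the first-return decomposition, hence monotone; adding the two monotone quantities and matching against $M_k(H_i)$ forces monotonicity of $M_k(H_i-v_i)$. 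Once that is in hand, every ingredient in the coalescence count is a sum of products of quantities that are monotone nondecreasing (coefficientwise) as we pass from $(H_1,v_1)$ to $(H_2,v_2)$, so $M_k(G(u=v_1)H_1)\le M_k(G(u=v_2)H_2)$ for every $k$; and if strict inequality holds at some $k$ in either hypothesis, it survives into at least one summand (e.g. the pure $N_{H_i}$ term or the lowest mixed term involving $v_i$), giving the strict conclusion and completing the proof.
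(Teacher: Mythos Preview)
Your plan has a genuine gap at precisely the point you flag as ``the main obstacle.'' You recognize that the natural decomposition produces the term $M_k(H_i-v_i)$ and that controlling it is not immediate, but your proposed fix does not work. First, the claim that the number of closed $k$-walks in $H_i$ meeting $v_i$ is a nonnegative-coefficient convolution of the rooted counts $M_\bullet(H_i,v_i)$ is unjustified: such walks begin and end at an arbitrary vertex, so their enumeration involves walk numbers from arbitrary vertices to $v_i$, not merely the diagonal quantities $M_\bullet(H_i,v_i)$. Second, even granting that claim, the inference you draw is invalid: from $B^{(1)}\le B^{(2)}$ and $A^{(1)}+B^{(1)}\le A^{(2)}+B^{(2)}$ one cannot conclude $A^{(1)}\le A^{(2)}$. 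In fact the hypotheses of the lemma do \emph{not} force $M_k(H_1-v_1)\le M_k(H_2-v_2)$ at all. Take $H_1=P_5$ with $v_1$ a leaf and $H_2=K_{1,4}$ with $v_2$ the center. One checks $H_1\preceq H_2$ (both have spectral moments $5,0,8,0,\dots$, and $2\cdot 3^k+2\le 2\cdot 4^k$ thereafter) and $M_k(H_1,v_1)\le 4^{k/2}=M_k(H_2,v_2)$ for all even~$k$; yet $H_1-v_1=P_4$ has $M_2=6$ while $H_2-v_2$ is edgeless with $M_2=0$. So any argument routed through an inequality for $M_k(H_i-v_i)$ is doomed.

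The paper sidesteps this entirely by constructing an explicit injection rather than a counting formula. Closed $k$-walks in $G(u=v_1)H_1$ are classified by which \emph{edges} they use: only $G$-edges, only $H_1$-edges, or both. Walks of the first kind map to themselves. Walks of the second kind---including those that happen to pass through $v_1$---are mapped wholesale by an injection $F^a$ furnished by the hypothesis $H_1\preceq H_2$. Mixed walks are cut at the identified vertex into $G$-pieces (kept as is) and $H_1$-pieces, each of the latter being a closed walk at $v_1$ in $H_1$, hence mapped by an injection $F^c$ furnished by the rooted hypothesis $M_k(H_1,v_1)\le M_k(H_2,v_2)$. The images of the three kinds land in disjoint sets of walks in $G(u=v_2)H_2$, so the whole map is injective. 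The crucial point is that pure-$H_1$ walks are handled in one stroke by $F^a$, so the problematic quantity $M_k(H_1-v_1)$ never needs to be isolated.
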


\begin{proof}
Since $M_k(H_1,v_1)\leq M_k(H_2,v_2)$ for each $k\geq 0$,
we can choose an injective mapping~$F^c_k$
from the set of closed walks in~$H_1$ of length~$k$ starting from~$v_1$
to the set of closed walks in~$H_2$ of length~$k$ starting from~$v_2$.
As the domains, as well as codomains, of~$F^c_k$ 
are mutually disjoint for distinct values of~$k$,
the union of these mappings $F^c=\cup_{k\geq 0} F^c_k$
will injectively map
the set of closed walks in~$H_1$ starting from~$v_1$
to the set of closed walks in~$H_2$ starting from~$v_2$.
Note that for each closed walk~$W$ in $H_1$ starting from~$v_1$,
its image $F^c(W)$ has the same length as~$W$.

In a similar way, we see that due to $H_1\preceq H_2$ 
we can choose an injective mapping~$F^a$ 
from the set of closed walks in~$H_1$
to the set of closed walks in~$H_2$,
such that for any closed walk~$W$ in $H_1$
its image $F^a(W)$ has the same length as~$W$.

Now we will construct an injective mapping~$I_k$
from the set of closed $k$-walks of $G(u=v_1)H_1$
to the set of closed $k$-walks of $G(u=v_2)H_2$.
Let $W$ be an arbitrary closed $k$-walk of $G(u=v_1)H_1$.
If $W$ contains edges from~$G$ only,
then we can set $I_k(W)=W$.
If, on the other hand, $W$ contains edges from $H_1$ only,
then we set $I_k(W)=F^a(W)$.

The more interesting cases arise when $W$ contains edges from both $G$ and~$H_1$.
The vertex $u=v_1$ then serves as the gate through which
the walk can pass from $G$ to~$H_1$ and vice versa, and 
$W$ has to contain at least two appearances of~$u$.
Let $m$ and $W_0,\dots,W_{m}$ be such that $W$ has the form
\begin{equation}
\label{eq-W-form}
W\colon W_0, u, W_1, u, \dots, u, W_{m-1}, u, W_m
\end{equation}
and that none of $W_0,\dots,W_m$ contains~$u$,
so that each $W_i$ belongs fully to either $G$ or~$H_1$.
We will now choose $W'_0,\dots,W'_m$ so that $I_k$ maps $W$ to
\begin{equation}
\label{eq-Ik-form}
I_k(W)\colon W'_0, u, W'_1, u, \dots, u, W'_{m-1}, u, W'_m.
\end{equation}
For $i\neq 0, m$,
if $W_i$ belongs to~$G$, it is mapped to itself so that $W'_i=W_i$,
while if $W_i$ belongs to~$H_1$,
determine $W'_i$ from the image $F^c(v_1,W_i,v_1)=v_2,W'_i,v_2$.

For $W_0$ and $W_m$,
if they are both empty or both belong to~$G$,
then we can also set $W'_0=W_0$ and $W'_m=W_m$.

In the case that $W_0$ and $W_m$ both belong to~$H_1$,
then let $Y_0$ and $Y_m$ be such that
$W_0\colon w, Y_0$ and $W_m\colon Y_m,w$,
where $w$ is the first and the last vertex of the closed walk~$W$.
Then $W^*\colon v_1,Y_m,w,Y_0,v_1$ is a closed walk in~$H_1$ starting at~$v_1$.
The image $F^c(W^*)$ is a closed walk in~$H_2$ starting at $v_2$ 
that has the same length as $W^*$.
Now let $Y'_0$ and $Y'_m$ be such that
$$
F^c(W^*)\colon v_2,Y'_m, w', Y'_0,v_2
$$
and that the walks $v_1,Y_m,w$ and $v_2,Y'_m,w'$, 
as well as $w,Y_0,v_1$ and $w',Y'_0,v_2$, have the same length.
Finally, set $W'_0\colon w',Y'_0$ and $W'_m\colon Y'_m,w'$.

One can see from the previous construction that
$I_k(W)$ is a closed walk in $G(u=v_2)H_2$
that has the same length as $W$, i.e., $k$.
The mapping $I_k$ is injective,
as we can easily reconstruct $W$ uniquely from $W'=I_k(W)$:
\begin{itemize}
\item if $W'$ contains edges from $G$ only, then $W=W'$;
\item if $W'$ contains edges from $H_2$ only, then $W=(F^a)^{-1}(W')$;
\item if $W'$ contains edges from both $G$ and~$H_2$,
      then $W'$ can be partitioned into subwalks $W'_0,\dots,W'_m$ according to~(\ref{eq-Ik-form}).
      Each subwalk~$W'_i$ then yields an appropriate subwalk~$W_i$ using 
      either identity map if $W'_i$ belongs to $G$ or $(F^c)^{-1}$ if $W'_i$ belongs to~$H_2$
      (with appropriate recombination of $W'_0$ and $W'_m$ when they both belong to~$H_2$),
      and $W$ can be obtained by combining subwalks $W_0,\dots,W_m$ according to~(\ref{eq-W-form}).
\end{itemize}

If either $H_1\prec H_2$ or $M_k(H_1,v_1)<M_k(H_2,v_2)$ for some~$k$,
then either $F^a$ or $F^c$ is not surjective,
so that $G(u=v)H_1\prec G(u=v_2)H_2$ holds.
\end{proof}

Note that the previous lemma can also be extended to multiple coalescences of graphs,
considered in~\cite{st15b}, that are obtained by identifying several pairs of vertices at once.

In order to apply Lemma~\ref{le-coalescence},
let $u$ be the central vertex of $S(a_1,\dots,a_{j-1})$
(or an appropriately chosen vertex when $j\leq 3$,
 see the proof of Proposition~\ref{pr-Case-I}),
$v_1$ the central vertex of 
$S(a,\underbrace{b,\dots,b}_{p\ \mathrm{times}}, \underbrace{b+1,\dots,b+1}_{q\ \mathrm{times}})$,
and $v_2$ the central vertex of $S(\underbrace{a+1,\dots,a+1}_{p+q\ \mathrm{times}},f)$.
Then $S(a_1,\dots,a_{j-1},a,\underbrace{b,\dots,b}_{p\ \mathrm{times}},
                         \underbrace{b+1,\dots,b+1}_{q\ \mathrm{times}})$
and $S(a_1,\dots,a_{j-1},\underbrace{a+1,\dots,a+1}_{p+q\ \mathrm{times}},f)$
can be considered as the coalescences:
\begin{eqnarray*}
&& S(a_1,\dots,a_{j-1},a,\underbrace{b,\dots,b}_{p\ \mathrm{times}},
                      \underbrace{b+1,\dots,b+1}_{q\ \mathrm{times}}) \\
&& \cong
   S(a_1,\dots,a_{j-1})
   (u=v_1)
   S(a,\underbrace{b,\dots,b}_{p\ \mathrm{times}},
       \underbrace{b+1,\dots,b+1}_{q\ \mathrm{times}}), \\
&& S(a_1,\dots,a_{j-1},\underbrace{a+1,\dots,a+1}_{p+q\ \mathrm{times}},f) \\
&& \cong
   S(a_1,\dots,a_{j-1})
   (u=v_2)
   S(\underbrace{a+1,\dots,a+1}_{p+q\ \mathrm{times}},f).
\end{eqnarray*}
By Lemma~\ref{le-coalescence},
the statement~(\ref{eq-Case-III}) will follow
from the following two statements:
\begin{eqnarray*}
   S(a,\underbrace{b,\dots,b}_{p\ \mathrm{times}},
       \underbrace{b+1,\dots,b+1}_{q\ \mathrm{times}})
&\prec& S(\underbrace{a+1,\dots,a+1}_{p+q\ \mathrm{times}},f), \\
M_k(S(a,\underbrace{b,\dots,b}_{p\ \mathrm{times}},
       \underbrace{b+1,\dots,b+1}_{q\ \mathrm{times}}),v_1)
&\leq& M_k(S(\underbrace{a+1,\dots,a+1}_{p+q\ \mathrm{times}},f),v_2),
\end{eqnarray*}
for all $k\geq 0$,
which we prove in the next two subsections.

\subsection{All closed walks in 
$S(a,b,\dots,b,b+1,\dots,b+1)$ and $S(a+1,\dots,a+1,f)$}

\begin{proposition}
\label{pr-all-closed-walks}
For arbitrary positive integers $a,b$ and~$q$ and nonnegative integer~$p$
such that $a<b$ and $p+q\geq 2$, except the case when $b=a+1$ and $q=1$,
let $f=(p+q-1)(b-a)+b-p$. Then
$$
S(a,\underbrace{b,\dots,b}_{p\ \mathrm{times}},
    \underbrace{b+1,\dots,b+1}_{q\ \mathrm{times}})
\prec S(\underbrace{a+1,\dots,a+1}_{p+q\ \mathrm{times}},f).
$$
\end{proposition}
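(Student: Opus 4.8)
The plan is to establish the stronger combinatorial fact that for every $k\ge0$ there is an injective, length-preserving map from the closed $k$-walks of $T_1:=S(a,\underbrace{b,\dots,b}_{p},\underbrace{b+1,\dots,b+1}_{q})$ into the closed $k$-walks of $T_2:=S(\underbrace{a+1,\dots,a+1}_{p+q},f)$ that is, moreover, not onto — exactly in the spirit of the embedding used for Proposition~\ref{pr-Case-III}. Before building it I would peel off the degenerate situation $f=b+1$: here $T_1$ and $T_2$ have branch-length multisets differing by one ``Li--Feng unit'' moved from the branch of length $a$ onto one branch of maximum length, and since the maximum branch length is at least $a+2$, a single application of Lemma~\ref{le-li-feng} to those two branches gives $T_1\prec T_2$. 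So from now on $f\ge b+2$, which is precisely the range in which $T_2$ has a branch strictly longer than every branch of $T_1$, and hence no chain of Li--Feng moves can reach $T_2$ from $T_1$, since each Li--Feng step leaves the maximum branch length non-increasing.

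For the embedding, let $c$ be the centre of $T_1$ and $c'$ the centre of $T_2$, the latter having $p+q$ short branches of length $a+1$ and one long branch $L\cong P_f$. I would split the closed walks of $T_1$ according to how they meet $c$ — using the same trichotomy as in Proposition~\ref{pr-Case-III}: walks avoiding $c$, walks based at $c$, and walks meeting $c$ but not based at it — and decompose each such walk at its visits to $c$ into branch-excursions, each excursion being a closed walk based at the $c$-neighbour inside one branch of $T_1$. Every such excursion is a walk in a path, hence reaches some maximal depth $D$; I would route a ``shallow'' excursion (depth at most $a$) into one of the short branches of $T_2$ and a ``deep'' excursion into the long branch $L$, which is long enough since $f\ge b+2>b\ge D$, recording by the choice of short branch used for a two-step detour which branch of $T_1$ the excursion came from, and then re-assembling the excursions in $T_2$ in the same cyclic order. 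The three walk types would be arranged to land in visibly disjoint families of walks of $T_2$, so that overall injectivity reduces to reconstructibility of each type separately.

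The crux — and the reason this case is genuinely harder than Proposition~\ref{pr-Case-III} — is that the branch-interiors of $T_1$ cannot be repacked into pairwise disjoint regions of $T_2$: a short computation shows that, in the range $f\ge b+2$, the $q$ copies of $P_{b+1}$ coming from the longest branches of $T_1$ need not fit disjointly inside the long branch-interior $P_f$ of $T_2$, so $L$ must host excursions that in $T_1$ lived in several different branches. Securing injectivity therefore forces the identity of ``which $T_1$-branch an excursion came from'' to be encoded partly geometrically (the excursion's depth and the placement of its image inside $L$) and partly combinatorially (the cyclic word over the short branches of $T_2$), and the real work is checking that lengths are preserved exactly and that two distinct closed walks of $T_1$ never collide; I expect this bookkeeping to be the main obstacle. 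Strictness, $T_1\prec T_2$, would then come for free by exhibiting closed walks of $T_2$ outside the image — for instance closed walks using two vertices of $L$ at distance more than $b+1$, which by construction can never be images. (If the explicit embedding turns out to be unmanageable, the fallback is to compare characteristic polynomials directly: with $\tilde\phi(G):=\det(I-tA(G))$ one has that $T_1\preceq T_2$ is equivalent to all Taylor coefficients of $\log\bigl(\tilde\phi(T_1)/\tilde\phi(T_2)\bigr)$ being non-positive, and the path polynomials $p_m(t):=\tilde\phi(P_m)$ satisfy $p_{m+n}=p_mp_n-t^2p_{m-1}p_{n-1}$ and $p_{a+1}p_b-p_ap_{b+1}=t^{2a+2}p_{b-a-1}$, which together with the starlike-tree formula $\tilde\phi(S(a_1,\dots,a_k))=\prod_i p_{a_i}-t^2\sum_i p_{a_i-1}\prod_{j\ne i}p_{a_j}$ reduce the claim to a coefficient inequality.)
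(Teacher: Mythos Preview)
Your main plan has a concrete obstruction at the very first step of the trichotomy. Walks of your type~(i) --- closed walks avoiding the center --- are closed walks in the disjoint union of the branch interiors, so in $T_1$ they number $M_k(P_a)+p\,M_k(P_b)+q\,M_k(P_{b+1})$, while in $T_2$ they number $(p+q)\,M_k(P_{a+1})+M_k(P_f)$. These two quantities are \emph{not} comparable in the required direction: take $a=1$, $b=3$, $p=0$, $q=2$ (so $f=5$, $T_1=S(1,4,4)$, $T_2=S(2,2,5)$) and $k=4$. Then the type-(i) count for $T_1$ is $M_4(P_1)+2M_4(P_4)=0+2\cdot 14=28$, whereas for $T_2$ it is $2M_4(P_2)+M_4(P_5)=2\cdot 2+20=24$. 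So there is no injection from the type-(i) walks of $T_1$ into the type-(i) walks of $T_2$, and your ``visibly disjoint families'' scheme cannot work as stated: any correct injection must send some center-avoiding walks of $T_1$ to walks of $T_2$ that pass through $c'$, which your decomposition gives no mechanism for and which destroys the clean separation you rely on for injectivity. (In this same example $M_4(T_1)=M_4(T_2)=54$, so the overall inequality is tight at $k=4$; the slack you need for type~(i) must be borrowed exactly from types~(ii) and~(iii).)

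This is precisely why the paper does \emph{not} attempt a single global injection here. Instead it passes through two auxiliary starlike trees, $S(a,(b+1)^{p+q})$ and $S((a+1)^{p+q},b)$, whose characteristic polynomials share a common factor (Lemma~\ref{le-factoring}), yielding the exact identity
\[
M_k\bigl(S(a,(b+1)^{p+q})\bigr)-M_k\bigl(S((a+1)^{p+q},b)\bigr)=(p+q-1)\bigl(M_k(P_{b+1})-M_k(P_{a+1})\bigr),
\]
and then bridges from $T_1$ to the first auxiliary tree and from the second to $T_2$ by repeated use of a simple ``path-extension'' inequality (Lemma~\ref{le-path-difference} and its corollaries). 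The algebra absorbs exactly the cross-type leakage that breaks your injection. Your fallback --- comparing Taylor coefficients of $\log\bigl(\tilde\phi(T_1)/\tilde\phi(T_2)\bigr)$ --- is in the right spirit, and the identities you list (Schwenk-type recursions for $p_m$) are correct, but you have not carried it far enough to see that the natural move is to interpose the two auxiliary trees rather than compare $T_1$ and $T_2$ directly.
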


\begin{proof}
The excluded case $b=a+1$ and $q=1$ 
(that has been dealt with by Lemma~\ref{le-li-feng} right after (\ref{eq-Case-III}) was stated)
yields $f=b$,
while all other cases imply that $f\geq b+1$.

We prove the inequality between the numbers of closed walks in these starlike trees
by factoring their characteristic polynomials,
that reveals a common part of their spectra
that cancels out after taking a difference of their spectral moments,
and by suitably interpreting 
the difference of the numbers of closed walks of appropriate subgraphs.

For a polynomial $Q(\lambda)$,
let $Sp(Q)$ denote the family of roots of~$Q$.
For a family~$L$ and a natural number~$m$,
let $mL$ denote the family in which every element of~$L$ is repeated $m$~times.
Then for a graph~$G$,
let $P(G,\lambda)=\det(\lambda I-A(G))$
denote the characteristic polynomial of the adjacency matrix of~$G$,
whose family of roots forms the spectrum consisting of all eigenvalues of~$G$,
that we will denote directly as $Sp(G)$.
To simplify notation, 
we will further abbreviate $P(P_n,\lambda)$ by $\bm{P}_n$.

\begin{lemma}
\label{le-factoring}
For $q\geq 2$ and arbitrary positive integers $c$ and~$d$,
\begin{equation}
\label{eq-factoring}
P(S(c,\underbrace{d,\dots,d}_{q\ \mathrm{times}}),\lambda)
=\bm{P}_d^{q-1}\left(\bm{P}_{c+d+1} - (q-1)\bm{P}_c\bm{P}_{d-1}\right).
\end{equation}
\end{lemma}

\begin{proof}[Proof of Lemma~\ref{le-factoring}]
Schwenk~\cite{sc74} proved that if $uv$ is a cut edge of~$G$, then
$$
P(G,\lambda)= P(G-uv,\lambda)-P(G-u-v,\lambda)
$$
where $G-uv$ means removal of edge~$uv$ only,
while $G-u-v$ means removal of vertices $u$ and~$v$ and all incident edges from~$G$.
Applying this result to an edge incident with the center of~$S(c,d,d)$
that belongs to one of the branches of length~$d$,
we obtain
\begin{eqnarray*}
P(S(c,d,d),\lambda)
 &=& \bm{P}_{c+d+1}\bm{P}_d-\bm{P}_c\bm{P}_d\bm{P}_{d-1} \\
 &=& \bm{P}_d\left(\bm{P}_{c+d+1}-\bm{P}_c\bm{P}_{d-1}\right),
\end{eqnarray*}
which proves the lemma in the case~$q=2$.
Taking this case as the basis of induction,
and assuming that (\ref{eq-factoring}) is valid for some $q-1\geq 2$,
we obtain by applying Schwenk's formula to 
an edge incident with the center of~$S(c,d,\dots,d)$
that belongs to one of the branches of length~$d$,
\begin{eqnarray*}
P(S(c,\underbrace{d,\dots,d}_{q\ \mathrm{times}}),\lambda)
 &=& \bm{P}_d P(S(c,\underbrace{d,\dots,d}_{q-1\ \mathrm{times}}),\lambda)
    -\bm{P}_{d-1}\bm{P}_c\bm{P}_d^{q-1} \\
 &=& \bm{P}_d\bm{P}_d^{q-2}\left(\bm{P}_{c+d+1}-(q-2)\bm{P}_c\bm{P}_{d-1}\right)
    -\bm{P}_{d-1}\bm{P}_c\bm{P}_d^{q-1} \\
 &=& \bm{P}_d^{q-1}\left(\bm{P}_{c+d+1}-(q-1)\bm{P}_c\bm{P}_{d-1}\right),
\end{eqnarray*}
which proves (\ref{eq-factoring}) for~$q$ as well.
\end{proof}

From Lemma~\ref{le-factoring} we now get
\begin{align*}
P(S(a,\underbrace{b+1,\dots,b+1}_{p+q\ \mathrm{times}}),\lambda)
 &= \bm{P}_{b+1}^{p+q-1}\left(\bm{P}_{a+b+2}-(p+q-1)\bm{P}_a\bm{P}_b\right), \\
P(S(\underbrace{a+1,\dots,a+1}_{p+q\ \mathrm{times}},b),\lambda)
 &= \bm{P}_{a+1}^{p+q-1}\left(\bm{P}_{a+b+2}-(p+q-1)\bm{P}_a\bm{P}_b\right).
\end{align*}
A common factor of these characteristic polynomials means that
these two starlike trees share a part of the spectrum:
\begin{align*}
Sp(S(a,\underbrace{b\!+\!1,\dots,b\!+\!1}_{p+q\ \mathrm{times}}),\lambda)
 &= (p\!+\!q\!-\!1)Sp(\bm{P}_{b\!+\!1})
    \cup Sp\left(\bm{P}_{a+b+2}\!-\!(p\!+\!q\!-\!1)\bm{P}_a\bm{P}_b\right), \\
Sp(S(\underbrace{a\!+\!1,\dots,a\!+\!1}_{p+q\ \mathrm{times}},b),\lambda)
 &= (p\!+\!q\!-\!1)Sp(\bm{P}_{a\!+\!1})
    \cup Sp\left(\bm{P}_{a+b+2}\!-\!(p\!+\!q\!-\!1)\bm{P}_a\bm{P}_b\right),
\end{align*}
which cancels out if we subtract their spectral moments~(\ref{eq-spectral-moment}):
\begin{eqnarray}
\nonumber
&& M_k((S(a,\underbrace{b+1,\dots,b+1}_{p+q\ \mathrm{times}}))
  - M_k(S(\underbrace{a+1,\dots,a+1}_{p+q\ \mathrm{times}},b)) \\
\label{eq-moment-canceling}
&=& (p+q-1)\left(M_k(P_{b+1}) - M_k(P_{a+1})\right).
\end{eqnarray}

Interpretation of the difference of the numbers of closed walks of paths,
appearing in the previous equation,
is provided by the following lemma and its corollaries.
\begin{lemma}
\label{le-path-difference}
Let $c$ and $d$ be positive integers and let $u$ be a vertex of a graph~$G$
such that $G$ contains a path~$P_{c+1}$ as a proper subgraph, 
with $u$ as one of its leaves.
If $v$ is a leaf of a path~$P_{d+1}$ that is vertex disjoint from~$G$,
then
\begin{equation}
\label{eq-path-difference}
M_k(G(u=v)P_{d+1}) \geq M_k(G) + M_k(P_{c+d+1}) - M_k(P_{c+1}).
\end{equation}
Strict inequality holds above for all sufficienly large values of~$k$.
\end{lemma}

\begin{proof}[Proof of Lemma~\ref{le-path-difference}]
After identifying $u$ and $v$ in the coalescence~$G(u=v)P_{d+1}$,
the path $P_{d+1}$ contributes $d$~new edges to the coalescence.
Color the edges of $G(u=v)P_{d+1}$ such that
the edges of~$P_{d+1}$ are red,
the edges of the proper subgraph~$P_{c+1}$ of~$G$ are blue,
while the remaining edges of~$G$ are black.
Edge disjoint paths $P_{c+1}$ and~$P_{d+1}$ in the coalescence $P_{c+1}(u=v)P_{d+1}$ 
form a path $P_{c+d+1}$ consisting of a total of $c+d$ blue and red edges.
Now $M_k(G)$ represents closed $k$-walks of $G(u=v)P_{d+1}$
consisting of blue and black edges,
while $M_k(P_{c+d+1})-M_k(P_{c+1})$ represents closed $k$-walks
consisting of blue and red edges,
that contain at least one red edge.
Sets of closed $k$-walks that these two terms count are disjoint,
from which the inequality~(\ref{eq-path-difference}) follows directly.
Moreover, for all sufficiently large values of~$k$,
$G(u=v)P_{d+1}$ contains closed $k$-walks that contain both black and red edges,
which are not counted by either of the two terms on the right hand side,
so that strict inequality then holds in~(\ref{eq-path-difference}).
\end{proof}

Repeated application of this lemma yields the following corollaries.
\begin{corollary}
\label{co-path-difference-disjoint}
Let $c_1,\dots,c_l$ and $d_1,\dots,d_l$ be 
two sequences of positive integers for some $l\geq 1$,
and let $u$ be a vertex of a graph~$G$ 
such that $G$ contains a path $P_{\max\{c_1,\dots,c_l\}+1}$ as a proper subgraph,
with $u$ as one of its leaves.
If paths $P_{d_1+1},\dots,P_{d_l+1}$ are mutually vertex disjoint,
and also vertex disjoint from~$G$,
and if $v_i$ is a leaf of the path $P_{d_i+1}$ for $i=1,\dots,l$,
then
\begin{align*}
& M_k(G(u=v_1)P_{d_1+1}\cdots(u=v_l)P_{d_l+1}) \\
& \geq M_k(G) + \sum_{i=1}^l \left(M_k(P_{c_i+d_i+1}) - M_k(P_{c_i+1})\right).
\end{align*}
\end{corollary}

\begin{corollary}
\label{co-path-difference-sequential}
Let $c_1,\dots,c_l$ and $d_1,\dots,d_l$ be 
two sequences of positive integers for some $l\geq 1$.
Let $P_{d_1+1},\dots,P_{d_l+1}$ be vertex disjoint paths,
with $u_i$ as one and $v_i$ as another leaf of $P_{d_i+1}$ for $i=1,\dots,l$.
Let $G$ be a graph, vertex disjoint from all of $P_{d_1+1},\dots,P_{d_l+1}$,
and $v_0$ one of its vertices, such that 
$G$ contains a path $P_{c_1+1}$ as a proper subgraph with $v_0$ as one of its leaves.
If further $G(v_0=u_1)P_{d_1+1}\cdots(v_{i-2}=u_{i-1})P_{d_{i-1}+1}$
contains a path $P_{c_1+\cdots+c_i+1}$ with $v_i$ as one of its leaves for each $i=2,\dots,l$,
then
\begin{align*}
& M_k(G(v_0=u_1)P_{d_1+1}\cdots(v_{l-1}=u_l)P_{d_l+1}) \\
& \geq M_k(G) + \sum_{i=1}^l \left(M_k(P_{c_i+d_i+1}) - M_k(P_{c_i+1})\right).
\end{align*}
\end{corollary}

\begin{proof}[Proof of Corollary~\ref{co-path-difference-sequential}]
Repeated application of Lemma~\ref{le-path-difference} yields
\begin{align*}
& M_k(G(v_0\!\!=\!\!u_1)P_{d_1\!+\!1}\cdots(v_{l\!-\!1}\!\!=\!\!u_l)P_{d_l\!+\!1}) \\
& \geq M_k(G(v_0\!\!=\!\!u_1)P_{d_1\!+\!1}\cdots
                           (v_{l\!-\!2}\!\!=\!\!u_{l\!-\!1})P_{d_{l\!-\!1}\!+\!1})
 \!+\! \left(M_k(P_{c_l\!+\!d_l\!+\!1})\!-\!M_k(P_{c_l\!+\!1})\right) \\
& \geq M_k(G(v_0\!\!=\!\!u_1)P_{d_1\!+\!1}\cdots
                           (v_{l\!-\!3}\!\!=\!\!u_{l\!-\!2})P_{d_{l\!-\!2}\!+\!1})
 \!+\!\!\!\sum_{i=l-1}^l\!\!\left(M_k(P_{c_i\!+\!d_i\!+\!1})\!-\!M_k(P_{c_i\!+\!1})\right) \\
& \dots \\
& \geq M_k(G)
  \!+\!\!\!\sum_{i=1}^l \left(M_k(P_{c_i\!+\!d_i\!+\!1})\!-\!M_k(P_{c_i\!+\!1})\right).
& \qedhere  
\end{align*}
\end{proof}

Back to the trees 
$S(a,\underbrace{b,\dots,b}_{p\ \mathrm{times}},
     \underbrace{b+1,\dots,b+1}_{q\ \mathrm{times}})$
and
$S(\underbrace{a+1,\dots,a+1}_{p+q\ \mathrm{times}},f)$
that we are concerned with in Proposition~\ref{pr-all-closed-walks}, 
application of Corollary \ref{co-path-difference-disjoint} yields
\begin{align}
\nonumber
& M_k(S(a,\underbrace{b+1,\dots,b+1}_{p+q\ \mathrm{times}})) \\
\label{eq-walk-inequality-1}
& \geq M_k(S(a,\underbrace{b,\dots,b}_{p\ \mathrm{times}},
               \underbrace{b+1,\dots,b+1}_{q\ \mathrm{times}}))
  + p\left(M_k(P_{b+1})-M_k(P_b)\right),
\end{align}
while, recalling that $f=(p+q-1)(b-a)+b-p = b+(q-1)(b-a)+p(b-a-1)$, 
application of Corollary~\ref{co-path-difference-sequential} yields
\begin{align}
\nonumber
& M_k(S(\underbrace{a+1,\dots,a+1}_{p+q\ \mathrm{times}},f)) \\
\nonumber
& \geq M_k(S(\underbrace{a+1,\dots,a+1}_{p+q\ \mathrm{times}},b)) \\
\label{eq-walk-inequality-2}
& + (q-1)\left(M_k(P_{b+1})-M_k(P_{a+1})\right)
  + p\left(M_k(P_{b})-M_k(P_{a+1})\right).
\end{align}
Finally, we have
\begin{align*}
& M_k(S(a,\underbrace{b,\dots,b}_{p\ \mathrm{times}},
          \underbrace{b+1,\dots,b+1}_{q\ \mathrm{times}})) \\
& \leq M_k(S(a,\underbrace{b+1,\dots,b+1}_{p+q\ \mathrm{times}}))
     - p\left(M_k(P_{b+1})\!-\!M_k(P_b)\right) 
     & (\mathrm{by }~(\ref{eq-walk-inequality-1})) \\
& = M_k(S(\underbrace{a+1,\dots,a+1}_{p+q\ \mathrm{times}},b))
     + (p\!+\!q\!-\!1)\left(M_k(P_{b+1})\!-\!M_k(P_{a+1})\right) \\
& \quad\ - p\left(M_k(P_{b+1})-M_k(P_b)\right) 
     & (\mathrm{by }~(\ref{eq-moment-canceling})) \\
& = M_k(S(\underbrace{a+1,\dots,a+1}_{p+q\ \mathrm{times}},b))
     + (q\!-\!1)\left(M_k(P_{b+1})\!-\!M_k(P_{a+1})\right) \\
& \quad\ + p\left(M_k(P_{b})-M_k(P_{a+1})\right) \\
& \leq M_k(S(\underbrace{a+1,\dots,a+1}_{p+q\ \mathrm{times}},b+(q-1)(b-a)+p(b-a-1))) 
     & (\mathrm{by }~(\ref{eq-walk-inequality-2})) \\
& = M_k(S(\underbrace{a+1,\dots,a+1}_{p+q\ \mathrm{times}},f)).
\end{align*}
Repeated application of Lemma~\ref{le-path-difference},
that was needed to obtain (\ref{eq-walk-inequality-1}) and~(\ref{eq-walk-inequality-2}),
shows that strict inequality holds above for all sufficiently large values of~$k$,
concluding the proof that
$S(a,\underbrace{b,\dots,b}_{p\ \mathrm{times}},
    \underbrace{b+1,\dots,b+1}_{q\ \mathrm{times}})
\prec S(\underbrace{a+1,\dots,a+1}_{p+q\ \mathrm{times}},f)$.
\end{proof}

\subsection{Closed walks starting at the centers of 
$S(a,b,\dots,b,b+1,\dots,b+1)$ and $S(a+1,\dots,a+1,f)$}

\begin{proposition}
\label{pr-central-closed-walks}
For arbitrary positive integers $a,b$ and $q$ and nonnegative integer~$p$
such that $a<b$ and $p+q\geq 2$,
let $f=(p+q-1)(b-a)+b-p$. Then for all $k\geq 0$
$$
M_k(S(a,\underbrace{b,\dots,b}_{p\ \mathrm{times}},
    \underbrace{b+1,\dots,b+1}_{q\ \mathrm{times}}),u)
\leq M_k(S(\underbrace{a+1,\dots,a+1}_{p+q\ \mathrm{times}},f),u'),
$$
where $u$ and $u'$ are the centers of the respective starlike trees.
\end{proposition}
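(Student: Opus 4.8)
The goal is to compare closed walks starting at the centers $u$ of $S(a,b^{(p)},(b{+}1)^{(q)})$ and $u'$ of $S((a{+}1)^{(p+q)},f)$. The natural tool is a bijective/injective walk translation in the spirit of Proposition~\ref{pr-Case-III}, but now anchored at the center rather than ranging over all vertices. I would first record the elementary fact that for a vertex $w$ of degree one lying at the end of a pendant path of length $\ell$ attached at a vertex $z$, and for a closed walk starting at $z$, the walk decomposes uniquely as $z,W_1,z,W_2,z,\dots,z,W_m,z$ where each $W_i$ lives entirely in one branch; the count $M_k(G,z)$ is thus governed by the multiset of generating functions of closed walks returning to the attachment point in each branch. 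For a branch which is a bare path of length $\ell$ this generating function depends only on $\ell$, and it is \emph{monotone increasing in $\ell$}: a returning walk of length $k$ in $P_{\ell+1}$ starting at a leaf embeds into one in $P_{\ell'+1}$ when $\ell\le\ell'$ (restrict the walk to the first $\ell+1$ vertices — it already lies there). This is exactly the phenomenon already exploited in Lemma~\ref{le-path-difference}.

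\textbf{Key steps.} (1) Reduce to branch data: writing $\varphi_\ell(k)$ for the number of closed $k$-walks in $P_{\ell+1}$ starting at a leaf, show $M_k(S(c_1,\dots,c_m),\text{center})$ is a fixed (combinatorial) functional of the multiset $\{\varphi_{c_1},\dots,\varphi_{c_m}\}$ — concretely, it counts interleavings $u,W_1,u,\dots,u,W_m,u$, so it is determined by how many returning walks of each length each branch offers, summed over compositions. (2) Establish the branchwise domination: I claim the multiset $\{a,\,b^{(p)},\,(b{+}1)^{(q)}\}$ is dominated, branch-by-branch after sorting, by $\{(a{+}1)^{(p+q)},\,f\}$ in the sense that there is an injection $\sigma$ from the first multiset to the second with $c\le\sigma(c)$ for every branch $c$ — indeed $a\le a{+}1$, each $b\le a{+}1$ is false in general, so this naive matching fails, and that is the crux. (3) Because naive branchwise domination fails (the short branches $b,b{+}1$ are \emph{longer} than $a{+}1$), I would instead mimic the coalescence/path-surgery argument: use Lemma~\ref{le-path-difference} and its Corollaries~\ref{co-path-difference-disjoint}–\ref{co-path-difference-sequential} at the level of \emph{centered} walk counts. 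That is, prove a pointed analogue of Lemma~\ref{le-path-difference} — moving a pendant path from one attachment leaf to the end of another pendant path does not decrease $M_k(\cdot,u)$ when $u$ stays fixed and the surgery happens away from $u$ — and then run exactly the chain of inequalities (\ref{eq-walk-inequality-1})–(\ref{eq-walk-inequality-2}) with the spectral-cancellation identity (\ref{eq-moment-canceling}) replaced by its pointed counterpart. The identity (\ref{eq-moment-canceling}) came from a shared factor of characteristic polynomials; the pointed analogue should come from the fact that $S(a,(b{+}1)^{(p+q)})$ and $S((a{+}1)^{(p+q)},b)$ have the \emph{same} centered walk generating function because both are ``$a$ together with $p{+}q$ copies of a fixed branch'' versus ``$p{+}q$ copies of a fixed branch together with $b$'' — symmetry in the roles of the distinguished branch, which at a vertex-pointed level is a genuine combinatorial bijection on walks (swap which branch plays the role of the ``long'' one).

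\textbf{Main obstacle.} The hard part is the pointed version of the path-surgery lemma: Lemma~\ref{le-path-difference} is proved by a slick edge-colouring argument that counts \emph{all} closed walks, and it is not immediate that the same colouring bijection respects the constraint ``starts at the fixed vertex $u$''. I expect one must argue more carefully that when the surgery is performed on a branch $B$ (detaching a sub-path and re-attaching it further out along $B$), every closed walk based at $u$ that uses $B$ decomposes at $u$ into branch-excursions, and within the single excursion into $B$ one can replay the colouring argument of Lemma~\ref{le-path-difference} verbatim on that excursion (which is itself a closed walk based at the attachment point). So the reduction is: excursion-decompose at $u$, then apply the \emph{unpointed} Lemma~\ref{le-path-difference} excursion-by-excursion — but one must check the injection assembles coherently across excursions, i.e.\ that the reconstruction in Lemma~\ref{le-path-difference} is canonical enough to be applied in parallel to several excursions of varying lengths, which it is, since it depends only on the excursion's length and the colour pattern. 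Once this pointed surgery lemma and the pointed symmetry identity are in hand, the arithmetic is identical to the previous subsection and yields the claimed inequality, with strictness for large $k$ that we do not even need here.
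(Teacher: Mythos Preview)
Your plan has a genuine gap at the step you flag least: the ``pointed analogue'' of~(\ref{eq-moment-canceling}). You assert that $S(a,(b{+}1)^{(p+q)})$ and $S((a{+}1)^{(p+q)},b)$ have the same centered walk generating function by a symmetry ``swap which branch plays the role of the long one''. This is false. The two trees have different branch-length multisets $\{a,(b{+}1)^{(p+q)}\}$ versus $\{(a{+}1)^{(p+q)},b\}$, and there is no such bijection on centered walks. Concretely, take $a=1$, $b=2$, $p+q=2$: then $M_4(S(1,3,3),\text{center})=11$ while $M_4(S(2,2,2),\text{center})=12$. The identity~(\ref{eq-moment-canceling}) for \emph{total} moments came from a genuinely algebraic coincidence (a shared polynomial factor in Lemma~\ref{le-factoring}), and that coincidence has no pointed counterpart. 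Without it your chain of inequalities cannot close, regardless of whether the pointed surgery lemma works (it probably does, via the excursion decomposition you sketch, but that is not where the argument breaks).

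The paper's proof does not try to transplant the machinery of Subsection~4.1 at all; it builds a direct injection from closed $k$-walks at~$u$ to closed $k$-walks at~$u'$. The crucial idea you are missing is this: in the source tree there is \emph{one} short branch~$D$ of length~$a$ and \emph{many} long branches $B_1,\dots,B_{p+q}$ of length $b$ or $b{+}1$, while in the target there are \emph{many} short branches $B'_1,\dots,B'_{p+q}$ of length~$a{+}1$ and \emph{one} long branch~$F$ of length~$f\geq b$. Decompose a centered walk in the source as
\[
u,Y_0,\ u,v_{i_1},W_1,v_{i_1},u,\ Y_1,\ u,v_{i_2},W_2,v_{i_2},u,\ Y_2,\ \dots
\]
where each $Y_j$ is an excursion into~$D$ and each $W_j$ an excursion into~$B_{i_j}$. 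Map every $W_j$ (long) isometrically into~$F$ (long enough), map $Y_0$ into~$F$ as well, and map each subsequent $Y_j$ (short) into the short branch~$B'_{i_j}$ \emph{indexed by the preceding long excursion}, swapping the order of the pair $(W_j,Y_j)$. The index~$i_j$ is recoverable from the image because it is recorded by which leaf-branch~$B'_{i_j}$ the image visits, so the map is injective. This ``tag the short excursion by the index of the adjacent long one'' trick is exactly what replaces the algebraic cancellation you were hoping for.
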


\begin{proof}
To describe the embedding of closed $k$-walks of 
$S(a,\underbrace{b,\dots,b}_{p\ \mathrm{times}}, 
 \underbrace{b\!+\!1,\dots,b\!+\!1}_{q\ \mathrm{times}})$ starting at~$u$
into closed $k$-walks of 
$S(\underbrace{a+1,\dots,a+1}_{p+q\ \mathrm{times}},f)$ starting at $u'$, 
let us name the appropriate parts of these starlike trees (see Fig.~\ref{fig-D}).
For each $i=1,\dots,p$,
let $v_i$ be the neighbor of~$u$ in the $i$-th branch of length~$b$ 
in $S(a,\underbrace{b,\dots,b}_{p\ \mathrm{times}}, 
 \underbrace{b+1,\dots,b+1}_{q\ \mathrm{times}})$,
and for $i=p+1,\dots,p+q$,
let $v_i$ be the neighbor of~$u$ in the $(i-p)$-th branch of length~$b+1$.
For $i=1,\dots,p+q$,
let $B_i$ denote the branch containing $v_i$, 
but without the vertex~$u$ and the edge~$uv_i$.
Let $D$ denote the remaining branch of length~$a$
in $S(a,\underbrace{b,\dots,b}_{p\ \mathrm{times}}, 
 \underbrace{b+1,\dots,b+1}_{q\ \mathrm{times}})$,
together with the center~$u$.
Similarly,
for $i=1,\dots,p+q$,
let $v'_i$ be the neighbor of~$u'$ in the $i$-th branch of length~$a+1$
in $S(\underbrace{a+1,\dots,a+1}_{p+q\ \mathrm{times}},f)$,
and let $B'_i$ be the branch containing $v'_i$, 
but without the vertex~$u$ and the edge~$u'v'_i$.
Let $F$ denote the remaining branch of length~$f$
in $S(\underbrace{a+1,\dots,a+1}_{p+q\ \mathrm{times}},f)$,
together with the center~$u'$.

\begin{figure}[ht]
\hspace*{36pt}\includegraphics[scale=0.4]{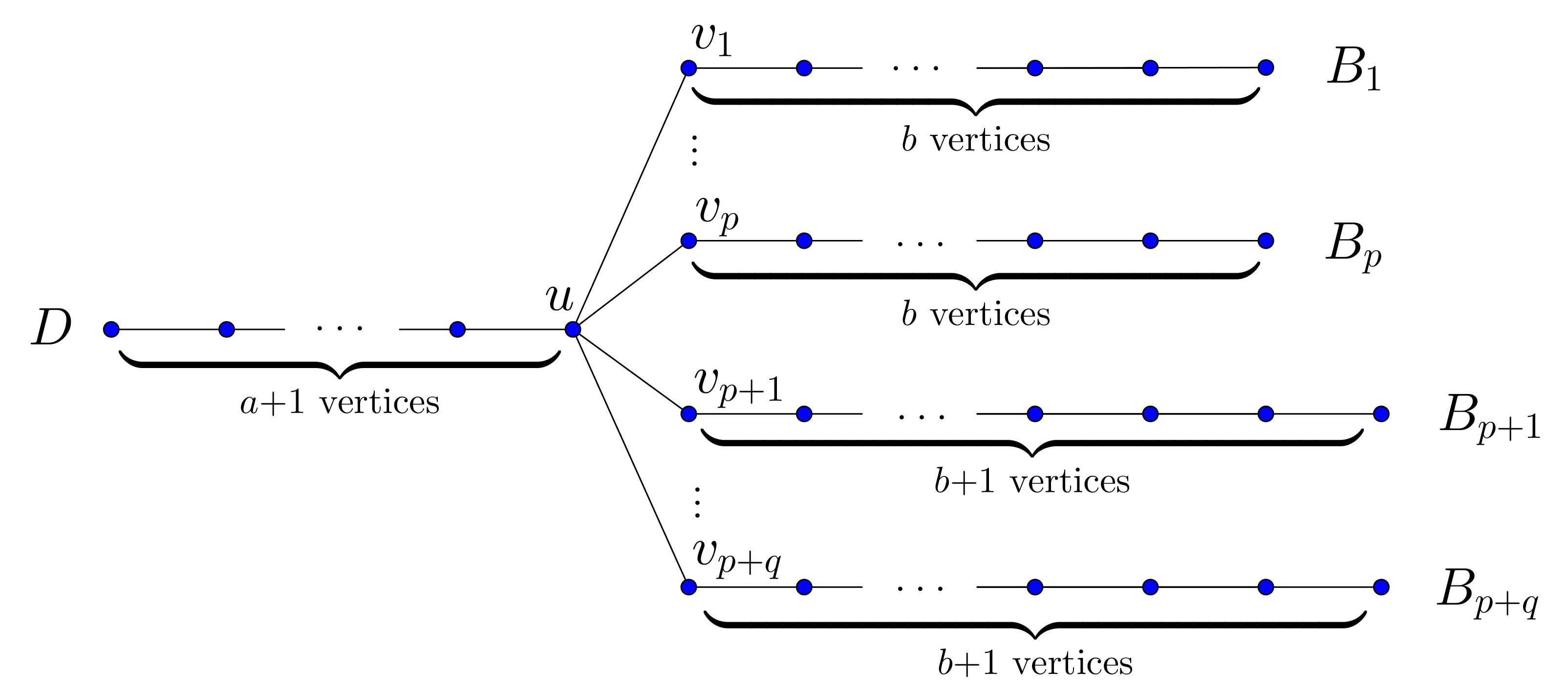} \\[12pt]
\hspace*{10pt}\includegraphics[scale=0.4]{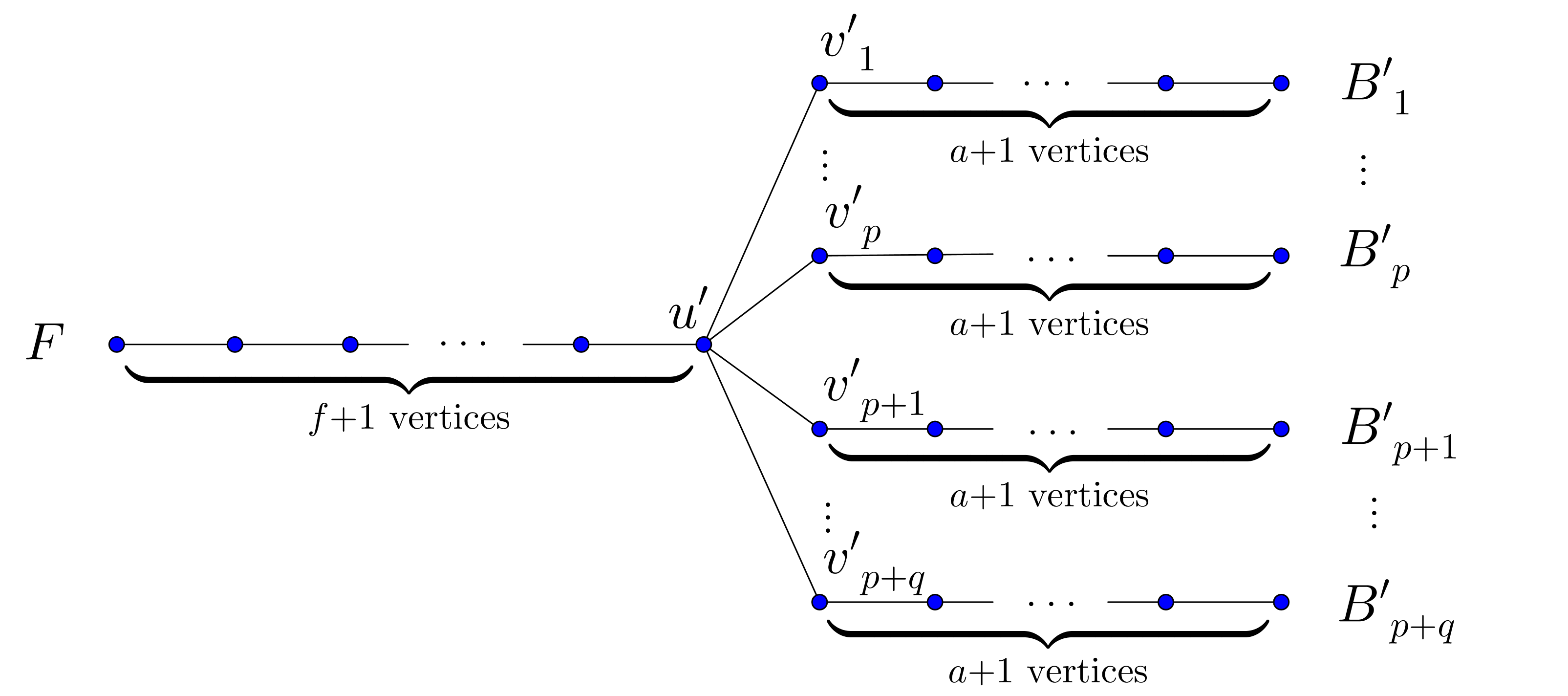}
\caption{Starlike trees $S(a,b,\dots,b,b+1,\dots,b+1)$ and $S(a+1,\dots,a+1,f)$.}
\label{fig-D}
\end{figure}

Further, for $i=1,\dots,p+q$
let $r_i$ be an isometric embedding that maps the branch~$B_i$ to~$F$ 
such that $r_i(v_i)=u'$, and 
let $s_i$ be an isometric embedding that maps the branch~$D$ to $B'_i$
such that $s_i(u)=v'_i$.
Also, let $t$ be an isometric embedding that maps the branch~$D$ to~$F$
such that $t(u)=u'$.
These isometric embeddings exist as
the branch~$B_i$ is isomorphic to either the path $P_{b}$ or $P_{b+1}$
and $F$ is isomorphic to the path $P_{f+1}$, where $f\geq b\geq a+1$,
while $D$ and each $B'_i$ are isomorphic to the path~$P_{a+1}$.

Now, let $W$ be a closed $k$-walk of 
$S(a,\underbrace{b,\dots,b}_{p\ \mathrm{times}}, 
     \underbrace{b+1,\dots,b+1}_{q\ \mathrm{times}})$
that starts at~$u$.
Appearances of edges $uv_1$,\dots,$uv_{p+q}$ in~$W$ 
may be used to represent it in the form
\begin{align*}
W\colon u,Y_0,
      & u,v_{i_1},W_1,v_{i_1},u,Y_1, \\
      & \dots, \\
      & u,v_{i_m},W_m,v_{i_m},u,Y_m,u,
\end{align*}
where the closed walk $u,Y_j,u$ (that possibly consists of~$u$ only)
fully belongs to the branch~$D$ for $j=0,\dots,m$,
while the closed walk $v_{i_j},W_j,v_{i_j}$ (that possibly consists of $v_{i_j}$ only)
fully belongs to the branch~$B_{i_j}$ for $j=1,\dots,m$.
Now set
\begin{align*}
Q(W)\colon u',t(Y_0),
         & u',v'_{i_1},s_{i_1}(Y_1),v'_{i_1},u',r_{i_1}(W_1), \\
         & \dots, \\
         & u',v'_{i_m},s_{i_m}(Y_m),v'_{i_m},u',r_{i_m}(W_m),u'. \\
\end{align*}
$Q(W)$ is a closed $k$-walk in $S(\underbrace{a+1,\dots,a+1}_{p+q\ \mathrm{times}},f)$,
as the first and the last vertex of~$Y_0$ are mapped by~$t$ to a neighbor of~$u'$ in~$F$,
the first and the last vertex of~$Y_j$ for $j=1,\dots,m$
are mapped by~$s_{i_j}$ to a neighbor of $v'_{i_j}$ in $B'_{i_j}$,
while the first and the last vertex of~$W_j$ for $j=1,\dots,m$
are mapped by $r_{i_j}$ to a neighbor of $u'$ in~$F$.
Less formally, 
the initial part of~$W$ until the first edge~$uv_{i_j}$ is 
translated by $Q$ from $D$ to $F$,
while in the remaining parts of~$W$, 
that are delineated by appearances of the edges~$uv_{i_j}$,
the $B_{i_j}$ part is translated to~$F$, the $D$ part is translated to~$B'_{i_j}$ 
and their positions in the walk are switched.
One can then use appearances of the edges $u'v'_{i_j}$ in~$Q(W)$
to determine $t(Y_0)$ and $s_{i_j}(Y_j)$ and $r_{i_j}(W_j)$ for $j=1,\dots,m$
and then use the inverses $t^{-1}$, $s^{-1}_{i_j}$ and $r^{-1}_{i_j}$
to uniquely reconstruct the original walk~$W$,
showing that $Q$ is an injective map.
\end{proof}

\section{Completing the proof of the main theorem}

Propositions \ref{pr-all-closed-walks} and~\ref{pr-central-closed-walks} 
complete the proof of Proposition~\ref{pr-Case-II}.
Propositions \ref{pr-Case-I}, \ref{pr-Case-III} and~\ref{pr-Case-II} now show that 
whenever two partitions $\alpha=(a_1,\dots,a_k)$ and $\beta=(b_1,\dots,b_l)$ of~$n$
are consecutive in the shortlex order with $\alpha<^{\mathrm{lex}}\beta$, then
$$
S(\alpha)\prec S(\beta).
$$
The shortlex order is a linear order on the set of partitions of~$n$,
so that for arbitrary two different partitions $\pi$ and~$\tau$ of~$n$ holds
either $\pi<^{\mathrm{lex}}\tau$ or $\tau<^{\mathrm{lex}}\pi$ and
there exists partitions $\pi_0,\pi_1,\dots,\pi_m$ such that
$\{\pi_0,\pi_m\}=\{\pi,\tau\}$ and 
for each $i=0,\dots,m-1$ 
partitions $\pi_i$ and~$\pi_{i+1}$ are consecutive in the shortlex order
with $\pi_i<^{\mathrm{lex}}\pi_{i+1}$.
Then by Propositions \ref{pr-Case-I}, \ref{pr-Case-III} and~\ref{pr-Case-II}
we have
$$
S(\pi_0)\prec S(\pi_1)\prec\cdots\prec S(\pi_m)
$$
showing that 
$$
\pi<^{\mathrm{lex}}\tau
\quad\Leftrightarrow\quad
S(\pi)\prec S(\tau),
$$
which completes the proof of our main result, Theorem~\ref{th-main}.

At the end,
let us note that the results presented here 
for the ordering of starlike trees by the numbers of closed walks
can be extended analogously to the ordering by the numbers of all walks,
which is still isomorphic to the shortlex ordering of their sorted branch lengths.

\bigskip
\footnotesize
\noindent\textit{Acknowledgments.}
This research was partly supported by research project ON174033 of 
the Ministry of Education, Science and Technological Development of the Republic of Serbia.
The author is also indebted to Stephan Wagner, Francesco Belardo and Milan Pokorny
for reading and discussing the initial draft of the article.

\end{document}